\newtheorem{theorem}{Theorem}[section]
\newtheorem{lemma}[theorem]{Lemma}
\newtheorem{corollary}[theorem]{Corollary}
\theoremstyle{definition}
\newtheorem{example}[theorem]{Example}
\theoremstyle{remark}
\newtheorem{remark}[theorem]{Remark}
\numberwithin{equation}{section}
\begin{document}
\title{On the deterioration of convergence rate of spectral differentiations for functions with singularities}
\author{Haiyong Wang\footnotemark[1]~\footnotemark[2]}
\date{}
\maketitle

\footnotetext[1]{School of Mathematics and Statistics, Huazhong
University of Science and Technology, Wuhan 430074, P. R. China.
E-mail: \texttt{haiyongwang@hust.edu.cn}}

\footnotetext[2]{Hubei Key Laboratory of Engineering Modeling and
Scientific Computing, Huazhong University of Science and Technology,
Wuhan 430074, P. R. China}

\begin{abstract}
Spectral differentiations are basic ingredients of spectral methods. In this work, we analyze the pointwise rate of convergence of spectral differentiations for functions containing singularities and show that the deteriorations of the convergence rate at the endpoints, singularities and other points in the smooth region exhibit different patterns. As the order of differentiation increases by one, we show for functions with an algebraic singularity that the convergence rate of spectral differentiation by Jacobi projection deteriorates two orders at both endpoints and only one order at each point in the smooth region. The situation at the singularity is more complicated and the convergence rate either deteriorates two orders or does not deteriorate, depending on the parity of the order of differentiation, when the singularity locates in the interior of the interval and deteriorates two orders when the singularity locates at the endpoint. Extensions to some related problems, such as the spectral differentiation using Chebyshev interpolation, are also discussed. Our findings justify the error localization property of Jacobi approximation and differentiation and provide some new insight into the convergence behavior of Jacobi spectral methods.
\end{abstract}

{\bf Keywords:} spectral differentiation, singularities, pointwise error estimates, Chebyshev interpolation

\vspace{0.05in}

{\bf AMS subject classifications:} 65D25, 41A10, 41A25

\section{Introduction}\label{sec:introduction}
Spectral methods play an important role in the simulation of
differential equations arising in mathematics and physics. One of the most attractive
advantages of them is that they have the so-called spectral
accuracy, that is, their convergence rate depends solely on the
regularity of the underlying functions. Let $\Omega:=[a,b]$ and let $\omega(x)\geq0$ be a weight function. We introduce an inner product and the associated norm
\begin{equation}\label{def:InnerProd}
\langle f,g \rangle = \int_{\Omega}f(x)g(x)\omega(x)\mathrm{d}x,
\quad \|f\| = \sqrt{\langle f,f \rangle}.
\end{equation}
Let $\{\phi_k\}_{k=0}^{\infty}$ denote the sequence of orthogonal
polynomials respect to the above inner product, i.e., $\langle
\phi_j,\phi_k \rangle = \gamma_k\delta_{kj}$, where $\gamma_k>0$ and $\delta_{kj}$ is
the Kronnecker delta. Let $L_{\omega}^2(\Omega)$ denote the space of functions that are square integrable with respect to the inner product
\eqref{def:InnerProd} and let $\mathbb{P}_n$ denote the space of
polynomials of degree at most $n$, i.e.,
$\mathbb{P}_n=\mathrm{span}\{x^k\}_{k=0}^{n}$. For any
$f\in{L}_{\omega}^2(\Omega)$, the orthogonal projection from
${L}_{\omega}^2(\Omega)$ upon $\mathbb{P}_n$ can be written as
\begin{align}\label{eq:OrthProj}
(\Pi_n f)(x) = \sum_{k=0}^{n} a_k \phi_k(x), \quad  a_k = \frac{\langle
f,\phi_k \rangle}{\gamma_k}.
\end{align}
In the past decade, weighted and maximum error estimates of classical spectral approximations, including Jacobi, Laguerre and Hermite approximations, have received considerable research interest among spectral method community and their sharp results for analytic and differentiable functions are nowadays well understood.

Recently, pointwise error estimates of classical spectral approximations for functions with singularities have attracted considerable attention (see, e.g., \cite{Babuska2019,Trefethen2011,Wang2021,Wang2023a,Wang2023b,Xiang2023}). One motivation comes from the error behaviors of best approximation and Chebyshev interpolant. Specifically, Trefethen in \cite{Trefethen2011} compared the pointwise errors of best approximation and Chebyshev interpolant for $f(x)=|x-1/4|$ and observed that the accuracy of the former is better than that of the latter only in a small neighborhood of the singularity $x=1/4$. This observation was listed as the third of the seven myths on polynomial interpolation and quadrature and it shows that Chebyshev interpolant has the error localization property, that is, its maximum error is always dominated by the error at the singularity. How to understand this phenomenon? The present author in \cite{Wang2023a} studied this myth and proved that the convergence rate of Chebyshev approximations (including projection and interpolation) at each point away from the singularity is actually one order higher than that of at the singularity. This provides a rigorous justification for Trefethen's observation. Another motivation for the study of pointwise error estimates is to establish sharp error estimates for classical spectral approximations in the maximum norm. Consider the case of Legendre projections, i.e., $\phi_k(x)=P_k(x)$, where $P_k(x)$ is the Legendre polynomial of degree $k$. In view of the inequality $|P_k(x)|\leq1$, it is easily seen that the maximum error of Legendre projection satisfies
\begin{equation}
\|f - \Pi_nf\|_{\infty} \leq \sum_{k=n+1}^{\infty} |a_k|.
\end{equation}
Therefore, once the estimate of the Legendre coefficients $\{a_k\}$ was available, the error estimate of Legendre projection in the maximum norm can be established immediately. However, it was observed in \cite{Wang2018} that there is a half order loss in the maximum error estimate of Legendre projection for the example $f(x)=|x|$ even though the estimate of the Legendre coefficients is sharp. Why we lose half order? The key reason is that the maximum error of Legendre projection is attained at the singularity $x=0$ (see, e.g., \cite{Liu2021,Wang2021,Wang2023a}). However, the inequality $|P_k(x)|\leq1$ is sharp only when $x=\pm1$, i.e., $|P_k(\pm1)|=1$, but becomes pessimistic when $x\in(-1,1)$ since $|P_k(x)|=O(k^{-1/2})$ as $k\rightarrow\infty$. Clearly, this issue also highlights the necessity of studying pointwise error estimates of classical spectral approximations. More recently, Babu\v{s}ka and Hakula in \cite{Babuska2019} studied the pointwise error estimates of Legendre projection for the truncated power function $f(x)=(x-\xi)_{+}^{\sigma}$ and they investigated the preasymptotic and asymptotic convergence rates of Legendre projection at each point $x\in[-1,1]$. Xiang etc in \cite{Xiang2023} further established the pointwsie rates of convergence of Jacobi projection for the truncated power functions and functions with an algebraic singularity.

In this work, we continue the research initiated in \cite{Wang2023a} and systematically analyze the pointwise rate of convergence of spectral differentiation for functions containing singularities. An interesting finding in \cite[Theorem 3.3]{Wang2023a} is that the convergence rate of the first-order spectral differentiation by Chebyshev projection at the interior algebraic singularity does not deteriorate. It is curious to ask how does spectral differentiation deteriorate its convergence rate at each point of the approximation interval? Here we will explore this issue thoroughly and show that the deteriorations of convergence rate of spectral differentiation at the endpoints, singularities and other points in the smooth region exhibit quite different patterns. Specifically, for functions with an algebraic singularity, the convergence rate of spectral differentiation by Jacobi projection deteriorates two orders at both endpoints and only one order at each point in the smooth region as the order of differentiation increases by one. The situation at the singularity is more complicated and the convergence rate deteriorates two orders if the singularity is located at the endpoint and does not deteriorate when $m$ is even and deteriorates two orders when $m$ is odd as the order of differentiation increases from $m$ to $m+1$ if the singularity is located at the interior of the interval. Moreover, we also show that the maximum error of the first- and higher-order spectral differentiations by Jacobi projection is always dominated by the errors at the endpoints, a phenomenon that may be familiar in spectral method community. Finally, we extend our discussion to spectral differentiation by Chebyshev interpolation and Jacobi spectral differentiation for some other singular functions and similar deterioration results are proved except possibly at the interior singularity.


The paper is organized as follows. In the next section, we review some preliminaries that are useful for the subsequent analysis. In Section \ref{sec:Pointwise} we present a thorough analysis of the pointwise error estimates of Jacobi spectral differentiations for functions with an algebraic singularity. In Section \ref{sec:Extension}, we extend our analysis to several related problems. 
In Section \ref{sec:conclusion}, we finish this paper with some concluding remarks.

\section{Pointwise error estimates of Jacobi spectral differentiations for functions with singularities}\label{sec:Pointwise}
Let $P_n^{(\alpha,\beta)}(x)$ be the Jacobi polynomial of degree $n$ defined by
\begin{equation}\label{def:JacPoly}
P_n^{(\alpha,\beta)}(x) = \frac{(\alpha+1)_n}{n!} {}_2\mathrm{
F}_1\left[\begin{matrix} -n, & n+\alpha+\beta+1
\\   \alpha+1  \hspace{-1cm} &\end{matrix} ; ~ \frac{1-x}{2}
\right],
\end{equation}
where ${}_2 \mathrm{F}_1(\cdot)$ is the Gauss hypergeometric
function and $(z)_k$ denotes the Pochhammer symbol defined by
$(z)_{k} = (z)_{k-1}(z+k-1)$ for $k\geq1$ and $(z)_0=1$. Let $\alpha,\beta>-1$, $\Omega:=[-1,1]$ and let $\omega_{\alpha,\beta}(x)=(1-x)^{\alpha}(1+x)^{\beta}$ be the Jacobi weight function. We redefine the inner product \eqref{def:InnerProd} and the associated norm with respect to $\omega_{\alpha,\beta}(x)$ as
\begin{equation}\label{def:JacIP}
\langle f,g \rangle_{\alpha,\beta} = \int_{\Omega} f(x) g(x) \omega_{\alpha,\beta}(x) \mathrm{d}x, \quad
\|f \|_{\alpha,\beta} = \langle f,g \rangle_{\alpha,\beta}^{1/2}.
\end{equation}
It is well known that Jacobi polynomials satisfy the orthogonal relation with respect to the above inner product. More specifically, we have
\begin{equation}\label{eq:JacOrth}
\langle P_m^{(\alpha,\beta)}, P_n^{(\alpha,\beta)} \rangle_{\alpha,\beta} = h_n^{(\alpha,\beta)} \delta_{mn},
\end{equation}
where $\delta_{mn}$ is the Kronecker delta and
\begin{equation}
h_n^{(\alpha,\beta)} = \frac{2^{\alpha+\beta+1} \Gamma(n+\alpha+1) \Gamma(n+\beta+1)}{(2n+\alpha+\beta+1)\Gamma(n+1) \Gamma(n+\alpha+\beta+1)}. \nonumber
\end{equation}
Let $L_{\omega_{\alpha,\beta}}^2(\Omega)$ denote the space of functions that are square integrable with respect to the inner product \eqref{def:JacIP}, i.e., $L_{\omega_{\alpha,\beta}}^2(\Omega) = \left\{f: \|f \|_{\alpha,\beta} < \infty  \right\}$. For any $f\in L_{\omega_{\alpha,\beta}}^2(\Omega)$, the orthogonal projection onto the space $\mathbb{P}_n$ is defined by
\begin{equation}\label{eq:JacAppr}
S_n^{(\alpha,\beta)}(x) = \sum_{k=0}^{n} a_k^{(\alpha,\beta)} P_k^{(\alpha,\beta)}(x), \quad
a_k^{(\alpha,\beta)} = \frac{\langle f,P_k^{(\alpha,\beta)} \rangle_{\alpha,\beta}}{h_k^{(\alpha,\beta)}}.
\end{equation}
where $\{a_k^{(\alpha,\beta)}\}$ are the Jacobi coefficients of $f(x)$. In this section we shall consider the pointwise error estimate of Jacobi approximation and differentiation for functions with singularities. For simplicity of exposition we restrict our attention to the model function
\begin{equation}\label{def:ModelFun}
f(x)=|x-\xi|^{\sigma} g(x),
\end{equation}
where $\xi\in\Omega$ and $g(x)$ is analytic in a neighborhood of $\Omega$ and the exponent $\sigma$ is not an even integer when $\xi\in(-1,1)$ and is not an integer when $\xi=\pm1$. Extension to some other singular functions, such as truncated power functions, will be discussed in the next section.
\begin{remark}
For simplicity of exposition, we assumed that $g(x)$ is analytic in a neighborhood of $\Omega$. This assumption, however, can be relaxed to the case where $g\in{C}^{\nu}(\Omega)$ for some sufficiently large $\nu\in\mathbb{N}$ at the cost of more lengthy and cumbersome proofs.
\end{remark}


\subsection{Asymptotics of the Jacobi coefficients}
Our first result is stated in the following.
\begin{theorem}\label{thm:AsyJac}
Let $f$ be the function defined in \eqref{def:ModelFun} with the exponent $\sigma$ satisfying $\sigma>-1$ when $\xi\in(-1,1)$ and $\sigma>-\beta-1$ when $\xi=-1$ and $\sigma>-\alpha-1$ when $\xi=1$. Let $\{a_k^{(\alpha,\beta)}\}$ denote the Jacobi coefficients of $f$. As $k\rightarrow\infty$, the following results hold.
\begin{itemize}
\item[\rm(i)] If $\xi\in(-1,1)$, then
\begin{align}\label{eq:JacIntAsy}
a_k^{(\alpha,\beta)} &= \mathcal{A}_{\sigma,\xi}^{\alpha,\beta} \frac{\cos(k\arccos(\xi)-\psi_{\alpha,\beta}(\xi))}{k^{\sigma+1/2}} + O\left(\frac{1}{k^{\sigma+3/2}}\right),
\end{align}
where $\psi_{\alpha,\beta}(x)=(2\alpha+1)\pi/4 - (\alpha+\beta+1)\arccos(x)/2$ and
\begin{align}
\mathcal{A}_{\sigma,\xi}^{\alpha,\beta} &= \frac{\Gamma(\sigma+1) (1-\xi)^{\frac{\sigma+\alpha}{2}+\frac{1}{4}} (1+\xi)^{\frac{\sigma+\beta}{2}+\frac{1}{4}} }{ 2^{\frac{\alpha+\beta-3}{2}} \sqrt{\pi} } \cos\left(\frac{\sigma+1}{2} \pi \right) g(\xi). \nonumber
\end{align}

\item[\rm(ii)] If $\xi=-1$, then 
\begin{align}\label{eq:JacEndAsyL}
a_k^{(\alpha,\beta)} &= 
\frac{(-1)^k \mathcal{B}_{\sigma}^{L} }{k^{2\sigma+\beta+1}} + O\left(\frac{1}{k^{2\sigma+\beta+2}}\right), \quad
\mathcal{B}_{\sigma}^{L} = \frac{2^{\sigma+1}\Gamma(\sigma+\beta+1)}{\Gamma(-\sigma)} g(-1).
\end{align}
If $\xi=1$, then 
\begin{align}\label{eq:JacEndAsyR}
a_k^{(\alpha,\beta)} &= 
\frac{\mathcal{B}_{\sigma}^{R}}{k^{2\sigma+\alpha+1}} + O\left(\frac{1}{k^{2\sigma+\alpha+2}}\right), \quad
\mathcal{B}_{\sigma}^{R} = \frac{2^{\sigma+1}\Gamma(\sigma+\alpha+1)}{\Gamma(-\sigma)} g(1).
\end{align}
\end{itemize}
\end{theorem}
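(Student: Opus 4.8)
The plan is to start from the integral representation
\[
a_k^{(\alpha,\beta)} = \frac{1}{h_k^{(\alpha,\beta)}}\int_{-1}^{1} |x-\xi|^{\sigma}\, g(x)\, P_k^{(\alpha,\beta)}(x)\, \omega_{\alpha,\beta}(x)\,\mathrm{d}x,
\]
and to exploit the fact that, since $g$ is analytic in a neighbourhood of $\Omega$, the algebraic decay of $a_k^{(\alpha,\beta)}$ is governed entirely by the local behaviour of the integrand at the singular point $x=\xi$; any region bounded away from $\xi$ (in the interior case, in particular the endpoints $x=\pm1$) carries a smooth integrand and thus contributes faster than any power of $k$. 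The two cases $\xi\in(-1,1)$ and $\xi=\pm1$ require genuinely different machinery, so I would treat them separately.

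For the interior case (i) I would substitute $x=\cos\theta$, $\xi=\cos\phi$ with $\phi=\arccos(\xi)$, and insert the Darboux asymptotic formula for $P_k^{(\alpha,\beta)}(\cos\theta)$, valid uniformly on compact subsets of $(0,\pi)$, which furnishes the oscillatory factor $\cos(N\theta+\gamma)$ with $N=k+(\alpha+\beta+1)/2$ and $\gamma=-(2\alpha+1)\pi/4$ together with an amplitude of order $k^{-1/2}$. Combining the weight, the Jacobi measure, the Darboux amplitude and $\sin\theta$ into a single smooth amplitude $h(\theta)$, the coefficient reduces to the oscillatory integral $\int_0^\pi |\cos\theta-\cos\phi|^{\sigma} h(\theta)\cos(N\theta+\gamma)\,\mathrm{d}\theta$. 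A non-stationary-phase argument (repeated integration by parts) shows the part away from $\theta=\phi$ is negligible, while near $\theta=\phi$ one writes $\cos\theta-\cos\phi\sim -\sin\phi\,(\theta-\phi)$ and applies the Fourier--singularity evaluation $\int_{-\infty}^{\infty}|s|^{\sigma}\cos(Ns)\,\mathrm{d}s = 2\Gamma(\sigma+1)\cos\big((\sigma+1)\pi/2\big)N^{-\sigma-1}$, valid for $\sigma>-1$. This produces the factor $\cos\big((\sigma+1)\pi/2\big)$ and the rate $N^{-\sigma-1}$; dividing by $h_k^{(\alpha,\beta)}\sim 2^{\alpha+\beta}/k$ and carrying the $k^{-1/2}$ of the amplitude gives the rate $k^{-\sigma-1/2}$. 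Rewriting $N\phi+\gamma=k\arccos(\xi)-\psi_{\alpha,\beta}(\xi)$ recovers the stated phase, and evaluating $h$ at $\theta=\phi$ via $\sin(\phi/2)=\sqrt{(1-\xi)/2}$, $\cos(\phi/2)=\sqrt{(1+\xi)/2}$ reproduces $\mathcal{A}_{\sigma,\xi}^{\alpha,\beta}$. The remainder $O(k^{-\sigma-3/2})$ follows by retaining the next order in the Darboux expansion and in the singularity analysis; the contribution of $g(x)-g(\xi)$, carrying the milder singularity $|x-\xi|^{\sigma+1}$, is already $O(k^{-\sigma-3/2})$ and hence absorbed.

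For the endpoint case (ii), say $\xi=1$, I would use $|x-1|^{\sigma}=(1-x)^{\sigma}$ and the Rodrigues formula for $P_k^{(\alpha,\beta)}$. Taylor expanding $g$ at $x=1$, the leading contribution is $g(1)(1-x)^{\sigma}$; substituting Rodrigues into $\int_{-1}^{1}(1-x)^{\sigma+\alpha}(1+x)^{\beta}P_k^{(\alpha,\beta)}(x)\,\mathrm{d}x$ and integrating by parts $k$ times (boundary terms vanishing precisely because $\sigma>-\alpha-1$) transfers the $k$-th derivative onto $(1-x)^{\sigma}$, leaving the Beta integral $\int_{-1}^{1}(1-x)^{\sigma+\alpha}(1+x)^{k+\beta}\,\mathrm{d}x$ in closed form as a ratio of Gamma functions. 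Using the reflection formula to rewrite $(-1)^k/\Gamma(\sigma-k+1)$ as $\Gamma(k-\sigma)/\big(\Gamma(-\sigma)\Gamma(\sigma+1)\big)$, dividing by $h_k^{(\alpha,\beta)}$, and applying $\Gamma(k+a)/\Gamma(k+b)\sim k^{a-b}$ yields the rate $k^{-(2\sigma+\alpha+1)}$ and the constant $\mathcal{B}_{\sigma}^{R}$; the case $\xi=-1$ is identical after the reflection $x\mapsto -x$, which interchanges $\alpha\leftrightarrow\beta$ and produces the $(-1)^k$ in $\mathcal{B}_{\sigma}^{L}$. The error $O(k^{-2\sigma-\alpha-2})$ comes from the subleading term in the large-$k$ expansion of the Gamma ratio, which dominates the $O(k^{-2\sigma-\alpha-3})$ contribution of the next Taylor coefficient $g'(1)$.

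I expect the main obstacle to be the interior case: making the localization rigorous and, above all, controlling the oscillatory integral with enough precision to extract simultaneously the sharp constant $\mathcal{A}_{\sigma,\xi}^{\alpha,\beta}$ and the exact order $O(k^{-\sigma-3/2})$ of the remainder. The endpoint case is, by contrast, an essentially exact computation whose only delicate points are the justification of the repeated integration by parts and the careful bookkeeping of Gamma factors.
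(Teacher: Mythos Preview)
Your proposal is correct in outline but takes a genuinely different route from the paper, particularly in case~(i).

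For the interior case, the paper does not work through the Darboux formula and oscillatory-integral asymptotics. Instead, after Taylor expanding $g$ at $\xi$ (as you do), it splits each term $\int_{-1}^{1}|x-\xi|^{\sigma}(x-\xi)^{\ell}P_k^{(\alpha,\beta)}(x)\omega_{\alpha,\beta}(x)\,\mathrm{d}x$ into $\int_{-1}^{\xi}$ and $\int_{\xi}^{1}$ and evaluates each piece in closed form using a tabulated identity (Prudnikov, Equation~(2.22.4.1)) that expresses $\int (\xi-x)^{\mu}\omega_{\alpha,\beta}(x)P_k^{(\alpha,\beta)}(x)\,\mathrm{d}x$ as a Gauss hypergeometric function ${}_2F_1(-k-\alpha,k+\beta+1;\,\cdot\,;(1+\xi)/2)$ with large first two parameters. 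The leading behaviour in $k$ is then read off from Jones' large-parameter asymptotics of ${}_2F_1$, and the remainder $O(k^{-\sigma-3/2})$ comes for free from the next term in that expansion and the gamma-ratio asymptotics. This is considerably cleaner than your route: you correctly identify as the main obstacle the rigorous control of the localization and of the error in replacing $|\cos\theta-\cos\phi|^{\sigma}$ by $(\sin\phi)^{\sigma}|\theta-\phi|^{\sigma}$; the hypergeometric approach bypasses all of this because the integral is exact before any asymptotics are applied. Your method has the advantage of being self-contained and of making the origin of the phase and constant transparent, but the paper's method delivers the sharp remainder with essentially no analytic effort.

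For the endpoint case~(ii) the two approaches are equivalent in substance. The paper again lands on a ${}_2F_1$, which at $\xi=\pm1$ collapses via the Gauss summation formula ${}_2F_1(a,b;c;1)=\Gamma(c)\Gamma(c-a-b)/(\Gamma(c-a)\Gamma(c-b))$ into exactly the ratio of Gamma functions that your Rodrigues/integration-by-parts/Beta-integral computation produces. Both then finish with the same $\Gamma(k+a)/\Gamma(k+b)\sim k^{a-b}$ expansion.
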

\begin{proof}
Taking the Taylor expansion of $g(x)$ at $x=\xi$, we have
\begin{align}
a_k^{(\alpha,\beta)} &= \frac{1}{h_k^{(\alpha,\beta)}} \sum_{\ell=0}^{\infty} \frac{g^{(\ell)}(\xi)}{\ell!} \int_{\Omega} \omega_{\alpha,\beta}(x) P_k^{(\alpha,\beta)}(x) |x-\xi|^{\sigma} (x-\xi)^{\ell} \mathrm{d}x. \nonumber
\end{align}
For the integrals on the right-hand side, we have
\begin{align}
\int_{\Omega} \omega_{\alpha,\beta}(x) P_k^{(\alpha,\beta)}(x) |x-\xi|^{\sigma} (x-\xi)^{\ell} \mathrm{d}x &= (-1)^{\ell} \int_{-1}^{\xi} \omega_{\alpha,\beta}(x) P_k^{(\alpha,\beta)}(x) (\xi-x)^{\sigma+\ell} \mathrm{d}x \nonumber \\
& + \int_{\xi}^{1} \omega_{\alpha,\beta}(x) P_k^{(\alpha,\beta)}(x) (x-\xi)^{\sigma+\ell} \mathrm{d}x. \nonumber
\end{align}
Let $\Upsilon_{\ell}^{(1)}(k)$ and $\Upsilon_{\ell}^{(2)}(k)$ denote respectively the first and second integrals on the right-hand side of the last equation. By virtue of \cite[Equation~(2.22.4.1)]{Prud1986} we obtain after some simplification that 
\begin{align}
\Upsilon_{\ell}^{(1)}(k) &= \mathrm{B}(\beta+1,\sigma+\ell+1) 2^{\alpha} (1+\xi)^{\sigma+\ell+\beta+1} P_k^{(\alpha,\beta)}(-1) \nonumber  \\
&\times {}_2\mathrm{F}_1\left[\begin{matrix} -k-\alpha, & k+\beta+1
\\  \sigma+\ell+\beta+2  \hspace{-1cm} &\end{matrix} ; ~ \frac{1+\xi}{2}
\right], \nonumber
\end{align}
where $\mathrm{B}(\cdot,\cdot)$ is the Beta function, and by the symmetry property of Jacobi polynomials, i.e., $P_n^{(\alpha,\beta)}(x)=(-1)^n P_n^{(\beta,\alpha)}(-x)$, we obtain that
\begin{align}
\Upsilon_{\ell}^{(2)}(k) &= \mathrm{B}(\alpha+1,\sigma+\ell+1) 2^{\beta} (1-\xi)^{\sigma+\ell+\alpha+1} P_k^{(\alpha,\beta)}(1) \nonumber \\
&\times {}_2\mathrm{F}_1\left[\begin{matrix} -k-\beta, & k+\alpha+1
\\  \sigma+\ell+\alpha+2  \hspace{-1cm} &\end{matrix} ; ~ \frac{1-\xi}{2}
\right]. \nonumber
\end{align}
When $\xi\in(-1,1)$, by the asymptotic expansion of Gauss hypergeometric function in \cite{Jones2001} and the asymptotic expansion of the ratio of gamma functions in \cite[Equation~(5.11.13)]{Olver2010}, we derive the desired result \eqref{eq:JacIntAsy}. When $\xi=-1$, we easily seen that $\Upsilon_{\ell}^{(1)}(k)=0$ and
\begin{align}
\Upsilon_{\ell}^{(2)}(k) &= \mathrm{B}(\alpha+1,\sigma+\ell+1) 2^{\beta} (1-\xi)^{\sigma+\ell+\alpha+1} P_k^{(\alpha,\beta)}(1) {}_2\mathrm{F}_1\left[\begin{matrix} -k-\beta, & k+\alpha+1
\\  \sigma+\ell+\alpha+2  \hspace{-1cm} &\end{matrix} ; ~ 1
\right] \nonumber \\
&= \frac{2^{\alpha+\beta+\sigma+\ell+1} \Gamma(\sigma+\ell+\beta+1) \Gamma(\sigma+\ell+1) \Gamma(k+\alpha+1) }{\Gamma(k+1) \Gamma(\sigma+\ell+1-k) \Gamma(k+\sigma+\ell+\alpha+\beta+2)}, \nonumber
\end{align}
where we have used \cite[Equation~(15.4.20)]{Olver2010} in the second step. The result \eqref{eq:JacEndAsyL} then follows by using \cite[Equation~(5.11.13)]{Olver2010}. The proof of the case $\xi=1$ is similar and we omit the details. This ends the proof.
\end{proof}


\begin{remark}
From Theorem \ref{thm:AsyJac} we see that
\[
a_k^{(\alpha,\beta)} = \left\{
            \begin{array}{ll}
{\displaystyle O(k^{-2\sigma-\beta-1})},   & \hbox{$\xi=-1$,}   \\[6pt]
{\displaystyle O(k^{-\sigma-1/2})},   & \hbox{$\xi\in(-1,1)$,}         \\[6pt]
{\displaystyle O(k^{-2\sigma-\alpha-1})},  & \hbox{$\xi=1$,}
            \end{array}
            \right.
\]
and these decay rates have been derived in \cite{Xiang2020} by using the Hilb-type formula of Jacobi polynomials. Note that we only derived the leading terms of the asymptotic expansion of the Jacobi coefficients here, but higher order asymptotic expansions of the Jacobi coefficient can be derived with more calculations. As will be shown in the next subsection, these leading terms in Theorem \ref{thm:AsyJac} are sufficient for deriving the pointwise error estimates of Jacobi spectral differentiation. 
\end{remark}

\subsection{Pointwise error estimates of Jacobi spectral differentiations}
In this subsection we consider pointwise error estimates of spectral differentiations using Jacobi projection for the function \eqref{def:ModelFun}. Let $\mathbb{N}_0$ denote the set of all nonnegative integers. We define the remainder of Jacobi spectral differentiation of order $m$ by
\begin{equation}\label{def:Remainder}
\mathcal{R}_{n}^{m}(x) = \frac{\mathrm{d}^m}{\mathrm{d}x^m} \big( f(x) - S_n^{(\alpha,\beta)}(x) \big),
\end{equation}
where $m\in\mathbb{N}_0$. Note that $\mathcal{R}_{n}^{m}(x)$ denotes the remainder of Jacobi approximation when $m=0$. Moreover, it also depends on the parameters $\alpha,\beta,\sigma,\xi$, but we omit the dependence here for notational simplicity.


We define the following two functions
\begin{align}\label{def:PsiFun}
\Psi_{\nu}^{\mathrm{C}}(x,n) = \sum_{k=n+1}^{\infty}
\frac{\cos(kx)}{k^{\nu+1}}, \qquad \Psi_{\nu}^{\mathrm{S}}(x,n) =
\sum_{k=n+1}^{\infty} \frac{\sin(kx)}{k^{\nu+1}},
\end{align}
where $n\in\mathbb{N}_0$, $x\in\mathbb{R}$ and $\nu\in\mathbb{R}$ when
$x(\mathrm{mod}~2\pi)\neq0$ and $\nu>0$ when
$x(\mathrm{mod}~2\pi)=0$. Both functions were introduced in \cite{Wang2023a} to analyze the pointwise error estimates of Chebyshev approximations for the function
\eqref{def:ModelFun}. Their asymptotic behaviors as $n\rightarrow\infty$ will be useful.
\begin{lemma}\label{lem:AsmPsiFun}
If $x(\mathrm{mod}~2\pi)\neq0$, we have
\begin{align}
\Psi_{\nu}^{\mathrm{C}}(x,n) &= \frac{\sin((n+1/2)x)}{-2\sin(x/2)}
n^{-\nu-1} + O(n^{-\nu-2}),  \nonumber
\\[12pt]
\Psi_{\nu}^{\mathrm{S}}(x,n) &= \frac{\cos((n+1/2)x)}{2\sin(x/2)}
n^{-\nu-1} + O(n^{-\nu-2}).  \nonumber
\end{align}
If $x(\mathrm{mod}~2\pi)=0$, we have
\begin{align}
\Psi_{\nu}^{\mathrm{C}}(x,n) &= \frac{n^{-\nu}}{\nu} -
\frac{n^{-\nu-1}}{2} + O(n^{-\nu-2}), \quad
\Psi_{\nu}^{\mathrm{S}}(x,n) \equiv0. \nonumber
\end{align}
\end{lemma}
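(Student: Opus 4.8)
The plan is to treat the degenerate case $x\pmod{2\pi}=0$ and the oscillatory case $x\pmod{2\pi}\neq0$ separately, since the underlying mechanisms are completely different. In the degenerate case the summands lose their oscillation and the two series collapse to a real $p$-series tail, which I would resolve by Euler--Maclaurin summation; in the oscillatory case the boundedness of the partial sums of $e^{ikx}$ is the crucial feature, and I would extract the leading asymptotics by summation by parts (Abel summation).

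For the degenerate case, note that $\cos(kx)=1$ and $\sin(kx)=0$ for every $k$, so $\Psi_\nu^{\mathrm S}(x,n)\equiv0$ is immediate, while $\Psi_\nu^{\mathrm C}(x,n)=\sum_{k=n+1}^\infty k^{-\nu-1}$ is the tail of a convergent $p$-series (here the hypothesis $\nu>0$ is exactly what guarantees convergence). Applying the Euler--Maclaurin formula to $f(t)=t^{-\nu-1}$ starting from $t=n+1$ gives $\sum_{k=n+1}^\infty k^{-\nu-1}=\frac{(n+1)^{-\nu}}{\nu}+\tfrac12(n+1)^{-\nu-1}+O(n^{-\nu-2})$. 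Expanding $(n+1)^{-\nu}=n^{-\nu}-\nu\,n^{-\nu-1}+O(n^{-\nu-2})$ and $(n+1)^{-\nu-1}=n^{-\nu-1}+O(n^{-\nu-2})$ and collecting terms recovers $\tfrac{n^{-\nu}}{\nu}-\tfrac12 n^{-\nu-1}+O(n^{-\nu-2})$, as claimed.

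For the oscillatory case I would combine the two series into $\Psi_\nu^{\mathrm C}(x,n)+i\,\Psi_\nu^{\mathrm S}(x,n)=\sum_{k=n+1}^\infty e^{ikx}k^{-\nu-1}$. Writing $b_k=k^{-\nu-1}$ and $A_k=\sum_{j=n+1}^k e^{ijx}=\frac{e^{i(k+1)x}-e^{i(n+1)x}}{e^{ix}-1}$ (with $A_n=0$), the partial sums satisfy $|A_k|\le|\sin(x/2)|^{-1}$ uniformly. Summation by parts then yields $\sum_{k=n+1}^\infty e^{ikx}b_k=\sum_{k=n+1}^\infty A_k(b_k-b_{k+1})$, the boundary term at infinity vanishing because $|A_K|$ is bounded and $b_K\to0$. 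Splitting $A_k$ into its $k$-independent part $-\frac{e^{i(n+1)x}}{e^{ix}-1}$ and its oscillatory part, the former telescopes against $\sum_k(b_k-b_{k+1})=b_{n+1}$ and produces the leading contribution $-\frac{e^{i(n+1)x}}{e^{ix}-1}(n+1)^{-\nu-1}$. Using $e^{ix}-1=2i\,e^{ix/2}\sin(x/2)$ this equals $\frac{i\,e^{i(n+1/2)x}}{2\sin(x/2)}\,n^{-\nu-1}+O(n^{-\nu-2})$; separating real and imaginary parts reproduces the two stated formulas.

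The main obstacle is to show that the residual term $\frac{1}{e^{ix}-1}\sum_{k=n+1}^\infty e^{i(k+1)x}(b_k-b_{k+1})$ is genuinely $O(n^{-\nu-2})$ and hence negligible. A crude triangle-inequality bound only gives $\sum_k|b_k-b_{k+1}|=b_{n+1}=O(n^{-\nu-1})$, the same order as the leading term, so the oscillation must be exploited. I would do this by a second summation by parts: since $b_k-b_{k+1}=(\nu+1)k^{-\nu-2}+O(k^{-\nu-3})$ is (eventually) positive, monotone and of size $O(k^{-\nu-2})$, and the partial sums of $e^{i(k+1)x}$ remain bounded by $|\sin(x/2)|^{-1}$, Abel summation gains one extra order and bounds the residual by $O(n^{-\nu-2})$. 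I would finally remark that iterating this summation-by-parts step produces a full asymptotic expansion in powers of $n^{-1}$, and that conditional convergence of the oscillatory series (via Dirichlet's test) holds precisely when $\nu>-1$, consistent with the hypotheses.
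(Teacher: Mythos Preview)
Your argument is correct. The paper does not actually prove this lemma but simply cites \cite[Lemma~2.2]{Wang2023a}, so there is no in-paper proof to compare against. Your two-pronged strategy---Euler--Maclaurin summation for the degenerate tail, and Abel summation for the oscillatory case with a second summation by parts to push the residual from $O(n^{-\nu-1})$ down to $O(n^{-\nu-2})$---is the natural and standard route for such estimates, and the computations you outline (the telescoping of the $k$-independent part of $A_k$, the identity $e^{ix}-1=2i\,e^{ix/2}\sin(x/2)$, and the expansion of $(n+1)^{-\nu}$ in the Euler--Maclaurin step) all check out. One small caveat: your convergence argument via Dirichlet's test requires $\nu>-1$, whereas the paper's definition \eqref{def:PsiFun} nominally permits all $\nu\in\mathbb{R}$ in the oscillatory case; however, every application of the lemma in the paper (governed by Assumption~A) satisfies $\nu>-1$, so this is not a genuine gap in your proof but rather a slight imprecision in the paper's stated hypotheses.
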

\begin{proof}
See \cite[Lemma~2.2]{Wang2023a}.
\end{proof}

Now we consider the pointwise error estimate of Jacobi spectral differentiations for the function \eqref{def:ModelFun}. To ensure the uniform convergence of the derivatives of the Jacobi expansion, we introduce the following assumption. For detailed discussion on the uniform convergence of a Jacobi series, we refer to \cite{Belenkii1989} .\\[2pt]
\textbf{Assumption} A. Assume that $\sigma>0$ is not an even integer when $\xi\in(-1,1)$ and is not an integer when $\xi=\pm1$. Moreover, we also assume that
\begin{align}
m &< \left\{
            \begin{array}{ll}
{\displaystyle \min\left\{\sigma, \frac{\sigma-\alpha+1/2}{2}, \frac{\sigma-\beta+1/2}{2} \right\}},   & \hbox{$\xi\in(-1,1)$,}    \\[10pt]
{\displaystyle \sigma + \min\left\{0, \frac{\beta-\alpha+1}{2} \right\} },   & \hbox{$\xi=-1$,}     \\[10pt]
{\displaystyle \sigma + \min\left\{0, \frac{\alpha-\beta+1}{2} \right\} },   &
\hbox{$\xi=1$.}
            \end{array}
            \right. \nonumber
\end{align}
The main result of this work is stated in the following theorem.
\begin{theorem}\label{thm:JacobiDiff}
Let $m$ and $\sigma$ satisfy the assumption $\mathrm{A}$. The remainder of Jacobi spectral differentiation of order $m$ at each point $x\in\Omega$ satisfies
\begin{equation}
\mathcal{R}_{n}^{m}(x) = O(n^{-\kappa(x)}), \quad n\rightarrow\infty, \nonumber
\end{equation}
and the exponent $\kappa(x)$ is given below:
\begin{itemize}
\item[\rm(i)] If $\xi\in(-1,1)$, then for $x\neq\xi$,
\begin{align}
\kappa(x) &= \left\{
            \begin{array}{ll}
{\displaystyle \sigma+1/2-\beta-2m  },   & \hbox{$x=-1$,}   \\[5pt]
{\displaystyle \sigma+1/2-\alpha-2m },   & \hbox{$x=1$,}    \\[5pt]
{\displaystyle \sigma+1-m },             & \hbox{$x\in(-1,\xi)\cup(\xi,1)$,}
          \end{array}
          \right.
\end{align}
and for $x=\xi$,
\begin{align}
\kappa(x) &= \left\{
            \begin{array}{ll}
{\displaystyle \sigma+1-m},   & \mbox{$m$ odd,}    \\[5pt]
{\displaystyle \sigma-m  },   & \mbox{$m$ even.}
            \end{array}
            \right.
\end{align}

\item[\rm(ii)] If $\xi=-1$, then
\begin{align}
\kappa(x) &= \left\{
            \begin{array}{ll}
{\displaystyle 2\sigma - 2m },            & \hbox{$x=\xi$,}       \\[5pt]
{\displaystyle 2\sigma+\beta-\alpha+1-2m},   & \hbox{$x=-\xi$,}   \\[5pt]
{\displaystyle 2\sigma+\beta+3/2-m },     & \hbox{$x\in(-1,1)$.}
            \end{array}
            \right.
\end{align}
If $\xi=1$, then
\begin{align}
\kappa(x) &= \left\{
            \begin{array}{ll}
{\displaystyle 2\sigma - 2m },            & \hbox{$x=\xi$,}   \\[5pt]
{\displaystyle 2\sigma+\alpha-\beta+1-2m},   & \hbox{$x=-\xi$,}   \\[5pt]
{\displaystyle 2\sigma+\alpha+3/2-m },     & \hbox{$x\in(-1,1)$.}
            \end{array}
            \right.
\end{align}
\end{itemize}
\end{theorem}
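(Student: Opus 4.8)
The plan is to differentiate the Jacobi tail series term by term. Since $f-S_n^{(\alpha,\beta)}=\sum_{k=n+1}^{\infty}a_k^{(\alpha,\beta)}P_k^{(\alpha,\beta)}$, I would first argue that under Assumption~A the $m$-times differentiated series converges uniformly on $\Omega$, so that
\begin{equation}
\mathcal{R}_{n}^{m}(x) = \sum_{k=n+1}^{\infty} a_k^{(\alpha,\beta)}\, \frac{\mathrm{d}^m}{\mathrm{d}x^m}P_k^{(\alpha,\beta)}(x). \nonumber
\end{equation}
Uniform convergence is exactly what Assumption~A secures: inserting the coefficient decay from Theorem~\ref{thm:AsyJac} together with the endpoint growth of $(P_k^{(\alpha,\beta)})^{(m)}$ recorded below, the general term at $x=\pm1$ has amplitude $k^{\alpha+2m-\sigma-1/2}$ (resp.\ $k^{\beta+2m-\sigma-1/2}$), which tends monotonically to zero precisely when $m<(\sigma-\alpha+1/2)/2$ (resp.\ $m<(\sigma-\beta+1/2)/2$); combined with the boundedness of the partial sums of the oscillatory factor, Dirichlet's test then gives uniform convergence (see also \cite{Belenkii1989}). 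The constraint $m<\sigma$ guarantees that $f^{(m)}$ exists classically across the interior singularity, and the endpoint-singularity conditions are verified identically.

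Next I would invoke the differentiation identity
\begin{equation}
\frac{\mathrm{d}^m}{\mathrm{d}x^m}P_k^{(\alpha,\beta)}(x) = \frac{\Gamma(k+\alpha+\beta+m+1)}{2^m\,\Gamma(k+\alpha+\beta+1)}\, P_{k-m}^{(\alpha+m,\beta+m)}(x), \nonumber
\end{equation}
so that each summand splits into three asymptotically tractable factors: the coefficient $a_k^{(\alpha,\beta)}$ from Theorem~\ref{thm:AsyJac}, the gamma ratio which behaves like $k^m/2^m$, and the shifted polynomial $P_{k-m}^{(\alpha+m,\beta+m)}(x)$. For the last factor I would feed in the Darboux asymptotics at an interior point, giving an $O(k^{-1/2})$ oscillatory contribution whose phase simplifies to $\cos(k\arccos x-\psi_{\alpha,\beta}(x)-m\pi/2)$, and the exact boundary values, giving nonoscillatory growth $k^{\alpha+m}$ at $x=1$ and signed growth $(-1)^{k-m}k^{\beta+m}$ at $x=-1$. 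Multiplying the three factors reduces every summand to the canonical form $c\,k^{-(\nu+1)}\cos(k\vartheta-\phi)$, so the tail is recognized as a finite combination of the functions $\Psi_{\nu}^{\mathrm{C}}(\vartheta,n)$ and $\Psi_{\nu}^{\mathrm{S}}(\vartheta,n)$ from \eqref{def:PsiFun}, whose asymptotics are supplied by Lemma~\ref{lem:AsmPsiFun}.

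The exponent $\kappa(x)$ is then read off by determining which frequency $\vartheta$ is resonant, i.e.\ congruent to $0$ modulo $2\pi$. At a generic point the product of the coefficient phase and the polynomial phase is split by the product-to-sum formula into two waves of frequency $\arccos x\pm\arccos\xi$ (interior $\xi$) or $\pi\pm\arccos x$ (endpoint $\xi$), all nonresonant, so Lemma~\ref{lem:AsmPsiFun} contributes the faster rate $n^{-\nu-1}$; this produces $\kappa=\sigma+1-m$ in the interior smooth region, $\kappa=\sigma+1/2-\alpha-2m$ at $x=1$ and $\kappa=\sigma+1/2-\beta-2m$ at $x=-1$, and likewise the endpoint-singularity rates away from $\xi$. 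The genuinely different behavior occurs at $x=\xi$. There both phases carry the same frequency $\arccos\xi$, so one of the two resulting waves collapses to the constant amplitude $\tfrac12\cos(\psi_1-\psi_2)$ with $\psi_1=\psi_{\alpha,\beta}(\xi)$ and $\psi_2=\psi_{\alpha,\beta}(\xi)+m\pi/2$; this is a resonant ($\vartheta\equiv0$) term, so by Lemma~\ref{lem:AsmPsiFun} it dominates at the slower rate $n^{-\nu}$ with $\nu=\sigma-m$. Since its amplitude is $\tfrac12\cos(m\pi/2)$, it survives for $m$ even, yielding $\kappa=\sigma-m$, but vanishes for $m$ odd, leaving only the nonresonant wave of frequency $2\arccos\xi$ and hence $\kappa=\sigma+1-m$; this is precisely the parity dichotomy in the statement. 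At an endpoint singularity the analogous resonant term is nonoscillatory from the outset, giving $\kappa=2\sigma-2m$.

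The main obstacle I anticipate is not the leading-order bookkeeping but the rigorous control of the accumulated error terms. Each of the three asymptotic inputs carries a relative $O(k^{-1})$ remainder, and one must check that, after multiplication and summation against the $\Psi$-functions, every such remainder contributes only a lower-order power of $n$; this requires summation-by-parts estimates for the oscillatory tails, uniform in $x$ away from the resonant points, together with a careful treatment of the Taylor tail of $g$, whose higher-order terms merely shift $\sigma$ to $\sigma+\ell$ and are therefore harmless. The second delicate point is making the term-by-term differentiation of the first step fully rigorous up to and including the boundary, which is exactly why Assumption~A is imposed.
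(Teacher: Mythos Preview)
Your proposal is correct and follows essentially the same route as the paper: differentiate the tail series via the identity $\frac{\mathrm{d}^m}{\mathrm{d}x^m}P_k^{(\alpha,\beta)}=2^{-m}(k+\alpha+\beta+1)_m\,P_{k-m}^{(\alpha+m,\beta+m)}$, insert the coefficient asymptotics of Theorem~\ref{thm:AsyJac} together with the Darboux formula in the interior and the exact endpoint values at $x=\pm1$, reduce everything to the $\Psi_{\nu}^{\mathrm{C}}$ and $\Psi_{\nu}^{\mathrm{S}}$ tails, and read off the exponents from Lemma~\ref{lem:AsmPsiFun}, with the parity dichotomy at $x=\xi$ arising from the factor $\cos(m\pi/2)$ on the resonant wave. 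The paper handles the higher-order remainders more summarily than you propose (simply labeling them ``HOT'') and leaves the case $\xi=\pm1$ to the reader, but the ideas are identical.
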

\begin{proof}
We first consider the proof of (i). By the derivative formula of Jacobi polynomials in \cite[Equation~(18.9.15)]{Olver2010} and the asymptotic expansion of the ratio of gamma functions in \cite[Equation~(5.11.13)]{Olver2010}, we have
\begin{align}\label{eq:PointErrorS1}
\mathcal{R}_{n}^{m}(x) &= \sum_{k=n+1}^{\infty} a_k^{(\alpha,\beta)}
\frac{(k+\alpha+\beta+1)_m}{2^m} P_{k-m}^{(\alpha+m,\beta+m)}(x) \nonumber \\
&= \frac{1}{2^m} \sum_{k=n+1}^{\infty} a_k^{(\alpha,\beta)}
\frac{\Gamma(k+\alpha+\beta+m+1)}{\Gamma(k+\alpha+\beta+1)} P_{k-m}^{(\alpha+m,\beta+m)}(x) \nonumber \\
&= \frac{\mathcal{A}_{\sigma,\xi}^{\alpha,\beta}}{2^m} \sum_{k=n+1}^{\infty} \frac{\cos(k\arccos(\xi)-\psi_{\alpha,\beta}(\xi))}{k^{\sigma+1/2-m}} P_{k-m}^{(\alpha+m,\beta+m)}(x) + \mathrm{HOT}, \nonumber
\end{align}
where we have used the asymptotic of Jacobi coefficients in Theorem \ref{thm:AsyJac} in the last step and $\mathrm{HOT}$ denotes higher order term. Next we consider the asymptotic of $\mathcal{R}_{n}^{m}(x)$ at each point $x\in\Omega$ as $n\rightarrow\infty$. We first consider $x=-1$. Since
\begin{equation}
P_k^{(\alpha,\beta)}(-1) = (-1)^k \frac{(\beta+1)_k}{k!} = \frac{(-1)^k k^{\beta}}{\Gamma(\beta+1)}\left( 1 + O\left(\frac{1}{k}\right) \right), \quad k\gg1, \nonumber
\end{equation}
we obtain
\begin{align}
\mathcal{R}_{n}^{m}(-1) &=
\frac{(-1)^m \mathcal{A}_{\sigma,\xi}^{\alpha,\beta}}{2^{m} \Gamma(m+\beta+1)} \sum_{k=n+1}^{\infty}
\frac{(-1)^k \cos(k\arccos(\xi)-\psi_{\alpha,\beta}(\xi))}{k^{\sigma+1/2-2m-\beta}} + \mathrm{HOT}  \nonumber \\
&= \frac{(-1)^m \mathcal{A}_{\sigma,\xi}^{\alpha,\beta}}{2^{m} \Gamma(m+\beta+1)} \left[ \cos(\psi_{\alpha,\beta}(\xi)) \sum_{k=n+1}^{\infty}
\frac{(-1)^k \cos(k\arccos(\xi))}{k^{\sigma+1/2-2m-\beta}} \right. \nonumber \\
& \left. + \sin(\psi_{\alpha,\beta}(\xi)) \sum_{k=n+1}^{\infty}
\frac{(-1)^k \sin(k\arccos(\xi))}{k^{\sigma+1/2-2m-\beta}} \right] + \mathrm{HOT} \nonumber \\
&= \frac{(-1)^m \mathcal{A}_{\sigma,\xi}^{\alpha,\beta}}{2^{m} \Gamma(m+\beta+1)} \left[ \frac{\cos(\psi_{\alpha,\beta}(\xi))}{2} \left( \Psi_{\nu}^{\mathrm{C}}(2\varphi_{\xi}^{-}(-1),n) + \Psi_{\nu}^{\mathrm{C}}(2\varphi_{\xi}^{+}(-1),n) \right) \right. \nonumber \\
& \left. + \frac{\sin(\psi_{\alpha,\beta}(\xi))}{2} \left( \Psi_{\nu}^{\mathrm{S}}(2\varphi_{\xi}^{+}(-1),n) - \Psi_{\nu}^{\mathrm{S}}(2\varphi_{\xi}^{-}(-1),n) \right) \right] + \mathrm{HOT}, \nonumber
\end{align}
where $\nu=\sigma-2m-\beta-1/2$ and $\varphi_{\xi}^{\pm}(x)=(\arccos(x)\pm\arccos(\xi))/2$. Since $\varphi_{\xi}^{+}(-1)\in(\pi/2,\pi)$ and $\varphi_{\xi}^{-}(-1)\in(0,\pi/2)$, by Lemma \ref{lem:AsmPsiFun} and after some calculations, we obtain that
\begin{align}
\mathcal{R}_{n}^{m}(-1) = \frac{(-1)^{n+m+1} \mathcal{A}_{\sigma,\xi}^{\alpha,\beta} \cos(\psi_{\alpha,\beta}(\xi)-(n+1/2)\arccos(\xi))}{2^{m+1}\Gamma(m+\beta+1)\cos(\arccos(\xi)/2)} n^{2m+\beta-\sigma-1/2} + \mathrm{HOT},  \nonumber
\end{align}
and this proves the estimate at $x=-1$. Similarly, we can deduce that
\begin{align}
\mathcal{R}_{n}^{m}(1) = \frac{\mathcal{A}_{\sigma,\xi}^{\alpha,\beta} \sin(\psi_{\alpha,\beta}(\xi)-(n+1/2)\arccos(\xi))}{2^{m+1}\Gamma(m+\alpha+1)\sin(\arccos(\xi)/2)} n^{2m+\alpha-\sigma-1/2} + \mathrm{HOT},  \nonumber
\end{align}
and this proves the estimate at $x=1$. We next consider $x\in(-1,1)$. In this case, by \cite[Theorem~8.21.8]{Szego1975} we know that
\begin{align}
P_{k-m}^{(\alpha+m,\beta+m)}(x) &= \frac{2^{m+(\alpha+\beta+1)/2} \cos\left( k\arccos(x) - \psi_{\alpha,\beta}(x) - m\pi/2 \right) }{\sqrt{k\pi\omega_{m+\alpha+1/2,m+\beta+1/2}(x)}} + O\left(\frac{1}{k^{3/2}}\right), \nonumber
\end{align}
where the error term holds uniformly on any compact subsets of $(-1,1)$, and therefore
\begin{align}
\mathcal{R}_{n}^{m}(x) &= \frac{2^{(\alpha+\beta+1)/2} \mathcal{A}_{\sigma,\xi}^{\alpha,\beta} }{\sqrt{\pi \omega_{m+\alpha+1/2,m+\beta+1/2}(x) }} \nonumber \\
&~ \times \sum_{k=n+1}^{\infty}
\frac{\cos(k\arccos(\xi)-\psi_{\alpha,\beta}(\xi)) \cos(k\arccos(x) - \psi_{\alpha,\beta}(x) - m\pi/2)}{k^{\sigma+1-m}} + \mathrm{HOT}. \nonumber
\end{align}
Let $\Upsilon$ denote the sum on the right-hand side of the above equation. By direct calculations, $\Upsilon$ can be written as
\begin{align}
\Upsilon 
&= \frac{1}{2}\cos\left(\psi_{\alpha,\beta}(\xi)-\psi_{\alpha,\beta}(x)-m\pi/2\right) \Psi_{\sigma-m}^{\mathrm{C}}(2\varphi_{\xi}^{-}(x),n) \nonumber \\
& - \frac{1}{2}\sin\left(\psi_{\alpha,\beta}(\xi)-\psi_{\alpha,\beta}(x)-m\pi/2\right) \Psi_{\sigma-m}^{\mathrm{S}}(2\varphi_{\xi}^{-}(x),n) \nonumber \\
& + \frac{1}{2}\cos\left(\psi_{\alpha,\beta}(\xi)+\psi_{\alpha,\beta}(x)+m\pi/2\right) \Psi_{\sigma-m}^{\mathrm{C}}(2\varphi_{\xi}^{+}(x),n) \nonumber \\
& + \frac{1}{2}\sin\left(\psi_{\alpha,\beta}(\xi)+\psi_{\alpha,\beta}(x)+m\pi/2\right) \Psi_{\sigma-m}^{\mathrm{S}}(2\varphi_{\xi}^{+}(x),n). \nonumber
\end{align}
Next, we divide the discussion into two cases: $x\in(-1,\xi)\cup(\xi,1)$ and $x=\xi$. For $x\in(-1,\xi)\cup(\xi,1)$, we see that $\varphi_{\xi}^{+}(x)\in(0,\pi)$ and $\varphi_{\xi}^{-}(x)\in(-\pi/2,0)\cup(0,\pi/2)$. By Lemma \ref{lem:AsmPsiFun} and after some calculations, we deduce that
\begin{align}
\Upsilon = \frac{1}{4}& \left( \frac{\sin(\psi_{\alpha,\beta}(\xi)+\psi_{\alpha,\beta}(x)+m\pi/2-(2n+1)\varphi_{\xi}^{+}(x))}{\sin(\varphi_{\xi}^{+}(x))} \right. \nonumber \\
& \left. - \frac{\sin(\psi_{\alpha,\beta}(\xi)-\psi_{\alpha,\beta}(x)-m\pi/2+(2n+1)\varphi_{\xi}^{-}(x))}{\sin(\varphi_{\xi}^{-}(x))}   \right) n^{m-\sigma-1} + \mathrm{HOT},  \nonumber
\end{align}
and thus
\begin{align}
\mathcal{R}_{n}^{m}(x) &= \frac{2^{(\alpha+\beta-3)/2} \mathcal{A}_{\sigma,\xi}^{\alpha,\beta} }{\sqrt{\pi \omega_{m+\alpha+1/2,m+\beta+1/2}(x) }} \left( \frac{\sin(\psi_{\alpha,\beta}(\xi)+\psi_{\alpha,\beta}(x)+m\pi/2-(2n+1)\varphi_{\xi}^{+}(x))}{\sin(\varphi_{\xi}^{+}(x))} \right. \nonumber \\
& \left. - \frac{\sin(\psi_{\alpha,\beta}(\xi)-\psi_{\alpha,\beta}(x)-m\pi/2+(2n+1)\varphi_{\xi}^{-}(x))}{\sin(\varphi_{\xi}^{-}(x))}   \right) n^{m-\sigma-1} + \mathrm{HOT}.  \nonumber
\end{align}
This proves the estimate for $x\in(-1,\xi)\cup(\xi,1)$. 
For $x=\xi$, we find that $\varphi_{\xi}^{-}(\xi)=0$ and $\varphi_{\xi}^{+}(\xi)=\arccos(\xi)$, and thus
\begin{align}
\Upsilon = \frac{\cos\left(m\pi/2\right)}{2} \Psi_{\sigma-m}^{\mathrm{C}}(0,n) &+ \frac{\cos\left(2\psi_{\alpha,\beta}(\xi)+m\pi/2\right)}{2} \Psi_{\sigma-m}^{\mathrm{C}}(2\arccos(\xi),n) \nonumber \\
& + \frac{\sin\left(2\psi_{\alpha,\beta}(\xi)+m\pi/2\right)}{2} \Psi_{\sigma-m}^{\mathrm{S}}(2\arccos(\xi),n). \nonumber
\end{align}
When $m$ is odd, by Lemma \ref{lem:AsmPsiFun} we have
\begin{align}
\Upsilon &= \frac{\cos\left(2\psi_{\alpha,\beta}(\xi)+m\pi/2\right)}{2}  \Psi_{\sigma-m}^{\mathrm{C}}(2\arccos(\xi),n) \nonumber \\
&+ \frac{\sin\left(2\psi_{\alpha,\beta}(\xi)+m\pi/2\right)}{2} \Psi_{\sigma-m}^{\mathrm{S}}(2\arccos(\xi),n) \nonumber \\
&= \frac{\sin(2\psi_{\alpha,\beta}(\xi)+m\pi/2-(2n+1)\arccos(\xi))}{4\sin(\arccos(\xi))} n^{m-\sigma-1} + \mathrm{HOT}, \nonumber
\end{align}
and thus
\begin{align}
\mathcal{R}_{n}^{m}(\xi) &= \frac{2^{(\alpha+\beta-3)/2} \mathcal{A}_{\sigma,\xi}^{\alpha,\beta} \sin(2\psi_{\alpha,\beta}(\xi)+m\pi/2-(2n+1)\arccos(\xi)) }{\sin(\arccos(\xi)) \sqrt{\pi \omega_{m+\alpha+1/2,m+\beta+1/2}(\xi) }} n^{m-\sigma-1} + \mathrm{HOT}. \nonumber
\end{align}
When $m$ is even, by Lemma \ref{lem:AsmPsiFun} again we deduce that
\[
\Upsilon = \frac{(-1)^{m/2}}{2(\sigma-m)} n^{m-\sigma} + \mathrm{HOT},
\]
and thus
\begin{align}
\mathcal{R}_{n}^{m}(\xi) &= \frac{(-1)^{m/2} 2^{(\alpha+\beta-1)/2} \mathcal{A}_{\sigma,\xi}^{\alpha,\beta} }{(\sigma-m) \sqrt{\pi \omega_{m+\alpha+1/2,m+\beta+1/2}(\xi) }} n^{m-\sigma} + \mathrm{HOT}. \nonumber
\end{align}
This proves the estimate for $x=\xi$ and the proof of (i) is complete.

As for (ii), i.e., $\xi=\pm1$, the estimates of
$\mathcal{R}_{n}^{m}(x)$ can similarly be derived. We omit the details and this ends the proof.
\end{proof}


As a direct consequence, we obtain the convergence rate of Jacobi spectral differentiations in the maximum norm.
\begin{corollary}\label{cor:JacMax}
As $n\rightarrow\infty$, the following statements hold.
\begin{itemize}
\item[\rm(i)] If $\xi\in(-1,1)$, then
\begin{equation}
\|\mathcal{R}_{n}^{m}\|_{\infty} = \left\{
            \begin{array}{ll}
{\displaystyle O(n^{-\sigma+\max\{m,~2m+\alpha-1/2,~2m+\beta-1/2 \}})},   & \hbox{$m$ even,}    \\[5pt]
{\displaystyle O(n^{-\sigma+\max\{m-1,~2m+\alpha-1/2,~2m+\beta-1/2 \}})},   & \hbox{$m$ odd.}
            \end{array}
            \right.
\end{equation}
When $m=0$, the maximum error is attained at the singularity $x=\xi$ if $\max\{\alpha,\beta\}<1/2$ and at $x=-1$ if $\beta>\max\{\alpha,1/2\}$ and at $x=1$ if $\alpha>\max\{\beta,1/2\}$. When $m\geq1$, however, the maximum error is always attained at one of the endpoints. More precisely, the maximum error is attained at $x=-1$ when $\beta>\alpha$ and at $x=1$ when $\alpha>\beta$.

\item[\rm(ii)] If $\xi=-1$, then
\begin{equation}
\|\mathcal{R}_{n}^{m}\|_{\infty} = O(n^{-2\sigma+2m+\max\{0,\alpha-\beta-1\}}),
\end{equation}
and the maximum error is always attained at one of the endpoints. More precisely, the maximum error is attained at $x=\xi$ if $\alpha<\beta+1$ and at $x=-\xi$ if $\alpha>\beta+1$. If $\xi=1$, then
\begin{equation}
\|\mathcal{R}_{n}^{m}\|_{\infty} = O(n^{-2\sigma+2m+\max\{0,\beta-\alpha-1\}}),
\end{equation}
and the maximum error is always attained at one of the endpoints. More precisely, the maximum error is attained at $x=\xi$ if $\beta<\alpha+1$ and at $x=-\xi$ if $\beta>\alpha+1$.
\end{itemize}
\end{corollary}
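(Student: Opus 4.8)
The plan is to read off the maximum-norm rate as the slowest of the pointwise rates supplied by Theorem \ref{thm:JacobiDiff}. Since $\|\mathcal{R}_n^m\|_\infty=\max_{x\in\Omega}|\mathcal{R}_n^m(x)|$, writing $\kappa_{\min}=\min_{x\in\Omega}\kappa(x)$, the goal is to establish $\|\mathcal{R}_n^m\|_\infty = O(n^{-\kappa_{\min}})$ and to identify where this minimum is realized. The upper bound is what needs care (uniformity), while the matching lower bound is free: at whichever point attains $\kappa_{\min}$, the explicit leading term in Theorem \ref{thm:JacobiDiff} has a generically nonzero constant, so that point genuinely achieves the rate along a subsequence of $n$. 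Assumption $\mathrm{A}$ guarantees $\kappa(x)>0$ throughout, so every term converges.

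First I would carry out the elementary exponent comparison for case (i). The candidate values of $-\kappa(x)$ are $2m+\beta-1/2-\sigma$ at $x=-1$, $2m+\alpha-1/2-\sigma$ at $x=1$, $m-1-\sigma$ on the punctured smooth region, and at $x=\xi$ either $m-1-\sigma$ ($m$ odd) or $m-\sigma$ ($m$ even). Factoring out $-\sigma$, the largest is $\max\{m,2m+\alpha-1/2,2m+\beta-1/2\}$ when $m$ is even (the term $m$ coming from the singularity, which beats the smooth value $m-1$) and $\max\{m-1,2m+\alpha-1/2,2m+\beta-1/2\}$ when $m$ is odd (the singularity now matching the smooth rate), which gives the two displayed formulas. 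For the location, when $m=0$ the exponent is $\max\{0,\alpha-1/2,\beta-1/2\}$, whose branches are realized at $x=\xi$, $x=1$, $x=-1$ respectively, yielding the stated trichotomy in terms of $\max\{\alpha,\beta\}$ versus $1/2$. When $m\ge1$ I would verify that the larger endpoint exponent dominates: for $m$ odd the singularity reduces to the smooth rate and $2m+\max\{\alpha,\beta\}-1/2>m-1$ since $m+\max\{\alpha,\beta\}+1/2>0$; for $m$ even one has $m\ge2$, whence $m+\max\{\alpha,\beta\}>1/2$ and the endpoint again wins. Comparing the two endpoints places the maximum at $x=-1$ when $\beta>\alpha$ and at $x=1$ when $\alpha>\beta$.

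Case (ii) is entirely analogous. For $\xi=-1$ the values of $-\kappa(x)$ are $2m-2\sigma$ at $x=\xi$, $2m-2\sigma+\alpha-\beta-1$ at $x=-\xi$, and $m-2\sigma-\beta-3/2$ in the interior; since $m+\beta+3/2>0$ always holds, the interior never dominates, so the rate is $2m-2\sigma+\max\{0,\alpha-\beta-1\}$, with the maximum at $x=\xi$ when $\alpha<\beta+1$ and at $x=-\xi$ when $\alpha>\beta+1$. The case $\xi=1$ follows by the symmetry $\alpha\leftrightarrow\beta$.

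The step I expect to be the real obstacle is promoting the pointwise asymptotics to a genuine maximum-norm bound, i.e. ruling out a larger contribution from the boundary layers of width $O(1/n)$ near $x=\pm1$, where the Szeg\H{o} asymptotic used in Theorem \ref{thm:JacobiDiff} degrades, and near $x=\xi$. A naive triangle-inequality estimate $\|\mathcal{R}_n^m\|_\infty\le 2^{-m}\sum_{k>n}|a_k^{(\alpha,\beta)}|\,(k+\alpha+\beta+1)_m\,\|P_{k-m}^{(\alpha+m,\beta+m)}\|_\infty$ overshoots the endpoint rate by one power of $n$, because it discards the cancellation among the oscillatory Jacobi coefficients; hence one cannot bound termwise. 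Instead I would argue that the envelope of $|\mathcal{R}_n^m(x)|$ increases, in the exponent of $n$, from the interior rate toward the endpoint rate, so that the supremum lies within the endpoint layers and is captured, up to a constant, by the explicit endpoint asymptotics already computed; the interior estimate being uniform on compact subsets of $(-1,1)$ then shows no interior point can exceed it. This reduces the maximum-norm claim to the finite exponent comparison carried out above.
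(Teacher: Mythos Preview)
Your approach is exactly the paper's: the paper's entire proof is the single sentence ``It follows from Theorem \ref{thm:JacobiDiff} immediately.'' Your exponent comparisons are correct and spell out what the paper leaves implicit.

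The uniformity concern you raise in your last paragraph is real and is simply not addressed in the paper; the one-line proof tacitly assumes that the slowest pointwise rate equals the maximum-norm rate, without controlling the transition layers near $x=\pm1$ where the Szeg\H{o} asymptotic loses uniformity. Your heuristic that the envelope is monotone in the exponent toward the endpoints is the right intuition, and a full justification would use a uniform (Hilb-type) asymptotic for $P_{k-m}^{(\alpha+m,\beta+m)}$ valid in $[-1+\delta/k^2,1-\delta/k^2]$, or alternatively the Darboux/boundary-layer asymptotics matched to the endpoint values, to show that $|\mathcal{R}_n^m(x)|$ cannot exceed the endpoint value by more than a constant factor. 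So your proposal is strictly more careful than the paper's, and the gap you flag is a gap in the paper as written, not in your argument.
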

\begin{proof}
It follows from Theorem \ref{thm:JacobiDiff} immediately.
\end{proof}

Several remarks on Theorem \ref{thm:JacobiDiff} and the above corollary are in order.
\begin{remark}
When the order of Jacobi spectral differentiation increases from $m$ to $m+1$, we see that the convergence rate deteriorates two orders at both endpoints and only one order at each point in the smooth region. The pattern of deterioration at the singularity $x=\xi$ is quite different. More specifically, the convergence rate still deteriorates two orders if $\xi=\pm1$ and does not deteriorate when $m$ is even and deteriorates two order when $m$ is odd if $\xi\in(-1,1)$.
\end{remark}

\begin{remark}
When $m=0$, the convergence rate of Jacobi approximation at each point in the smooth region, i.e., $x\in(-1,\xi)\cup(\xi,1)$, is always faster than at the singularity $x=\xi$, regardless of $\xi\in(-1,1)$ or $\xi=\pm1$. This justifies the error localization property of Jacobi approximations, that is, their maximum error is always attained at one of the critical points (i.e., singularities and endpoints). Note that the estimates of $\mathcal{R}_{n}^{m}(x)$ for $m=0$ have been derived in \cite{Xiang2023} by using the reproducing kernel of Jacobi polynomials. Here we derive the estimates in a more general setting. Moreover, for $m\geq1$, the maximum error of Jacobi spectral differentiation of order $m$ is always dominated by the errors at the endpoints for all $\xi\in\Omega$.
\end{remark}

\begin{remark}\label{rk:Bernstein}
Bernstein in \cite{Bern1913,Bern1938} initiated the study of the limit
\[
\mu(\sigma) := \lim_{n\rightarrow\infty} n^{\sigma} \|f -
p_n^{*} \|_{\infty},
\]
for the function $f(x)=|x|^{\sigma}$ and $p_n^{*}$ denotes the best approximation of degree $n$ to $f$ in the maximum norm. The problem of finding $\mu(\sigma)$ has attracted considerable attention in approximation theory community. Here we consider an analogue for $S_n^{(\alpha,\beta)}(x)$. When $\max\{\alpha,\beta\}<1/2$, from Corollary \ref{cor:JacMax} we know that the maximum error of $S_n^{(\alpha,\beta)}(x)$ is attained at $\xi=0$ for large $n$. In this case, by the proof of Theorem \ref{thm:JacobiDiff} we know that
\[
\mathcal{R}_{n}^{m}(\xi) = \frac{(-1)^{m/2} 2^{(\alpha+\beta-1)/2} \mathcal{A}_{\sigma,\xi}^{\alpha,\beta} }{(\sigma-m) \sqrt{\pi \omega_{m+\alpha+1/2,m+\beta+1/2}(\xi) }} n^{m-\sigma} + \mathrm{HOT},
\]
and thus for $m=0$ and $\xi=0$,
\begin{align}
\lim_{n\rightarrow\infty} n^{\sigma} \|f-S_n^{(\alpha,\beta)}\|_{\infty}
&= \frac{2^{(\alpha+\beta-1)/2} |\mathcal{A}_{\sigma,0}^{\alpha,\beta}| }{\sigma\sqrt{\pi}} =  \frac{2\Gamma(\sigma)}{\pi} \left|\sin\left(\frac{\sigma\pi}{2}\right) \right|. \nonumber
\end{align}
Note that the last constant has been derived in \cite[Equation~(2.15)]{Wang2023a} in the case of Chebyshev approximation (i.e., $\alpha=\beta=-1/2$).
\end{remark}

\begin{figure}[htbp]
\centering
\includegraphics[width=0.6\textwidth,height=0.45\textwidth]{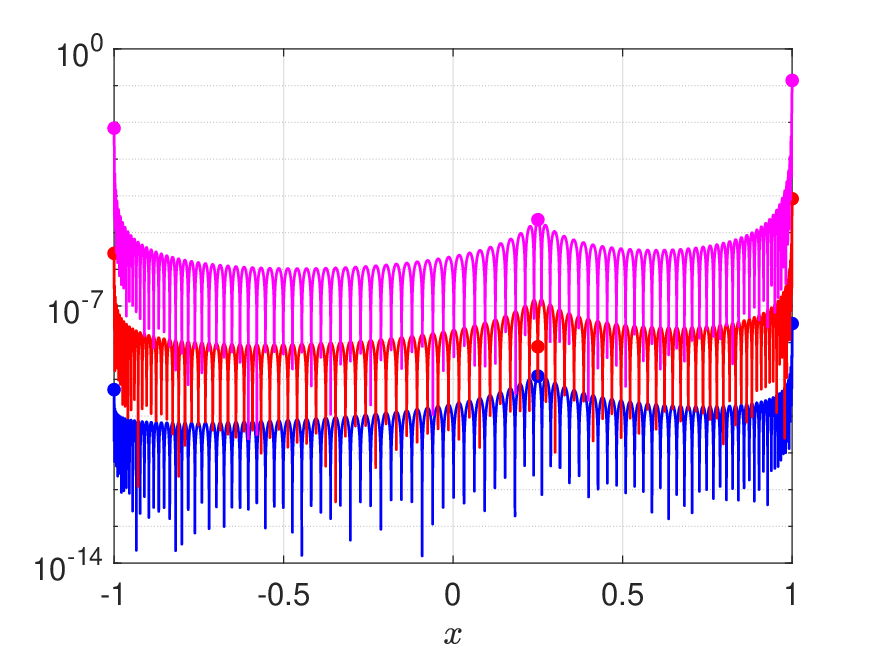}
\caption{Pointwise errors of the zero- and first- and second-order Jacobi differentiations (from bottom to top). Here $n=100$ and the points indicate the errors at the critical points.}\label{fig:JacobiDiff1}
\end{figure}

\begin{example}
Consider the pointwise error of Jacobi approximation and differentiations for $f(x)=|x-1/4|^5$. It is clear that $\xi=1/4,\sigma=5$ and, by Theorem \ref{thm:JacobiDiff}, the error decays at the rate $O(n^{2m+\beta-11/2})$ at $x=-1$ and $O(n^{2m+\alpha-11/2})$ at $x=1$ and $O(n^{m-6})$ at $x\in(-1,0)\cup(0,1)$. At the point $x=0$, the error decays at the rate $O(n^{m-5})$ when $m=0,2,4,\ldots$ and $O(n^{m-6})$ when $m=1,3,5,\ldots$. In Figure \ref{fig:JacobiDiff1} we plot the pointwise errors of Jacobi approximation and differentiations with $\alpha=1,\beta=0$ for $m=0,1,2$. Clearly, we see that the maximum errors are always dominated by the errors at one of the critical points (endpoints and singularity). To verify the predicted convergence rates of Jacobi differentiations at each point $x\in[-1,1]$, we plot in Figure \ref{fig:JacobiDiff2} the first and second order Jacobi differentiations at five points $x=-1,-1/4,0,1/4,1$. The convergence rates are $O(n^{-5/2})$ at $x=1$ and $O(n^{-7/2})$ at $x=-1$ and $O(n^{-5})$ at $x=0,\pm1/4$ for the first order Jacobi differentiation, and $O(n^{-1/2})$ at $x=1$ and $O(n^{-3/2})$ at $x=-1$ and $O(n^{-3})$ at $x=1/4$ and $O(n^{-4})$ at $x=-1/4,0$ for the second order Jacobi differentiation. We see from Figure \ref{fig:JacobiDiff2} that the convergence rates at these points are consistent with our predicted rates.
\end{example}

\begin{figure}[htbp]
\centering
\includegraphics[width=0.49\textwidth,height=0.4\textwidth]{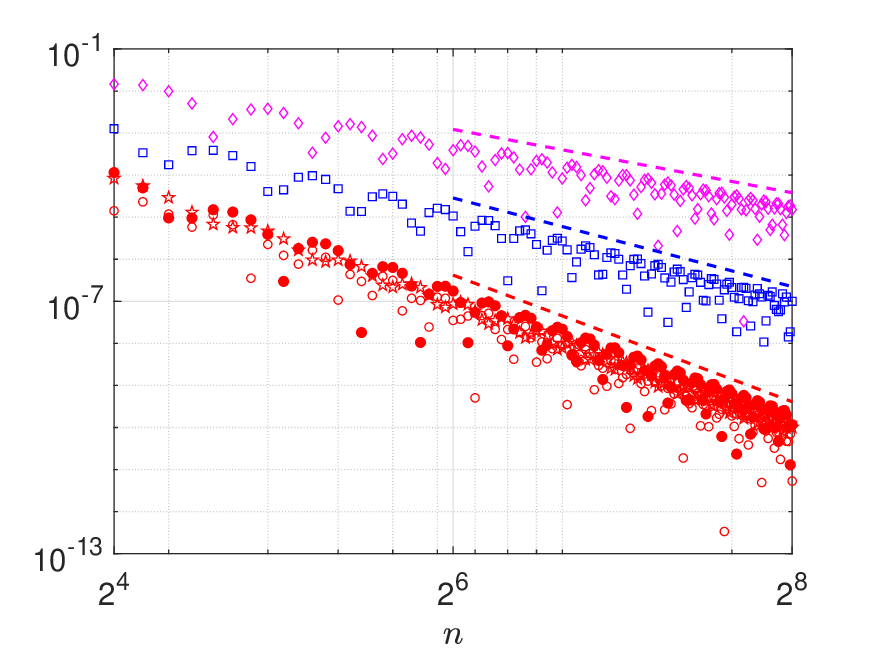}
\includegraphics[width=0.49\textwidth,height=0.4\textwidth]{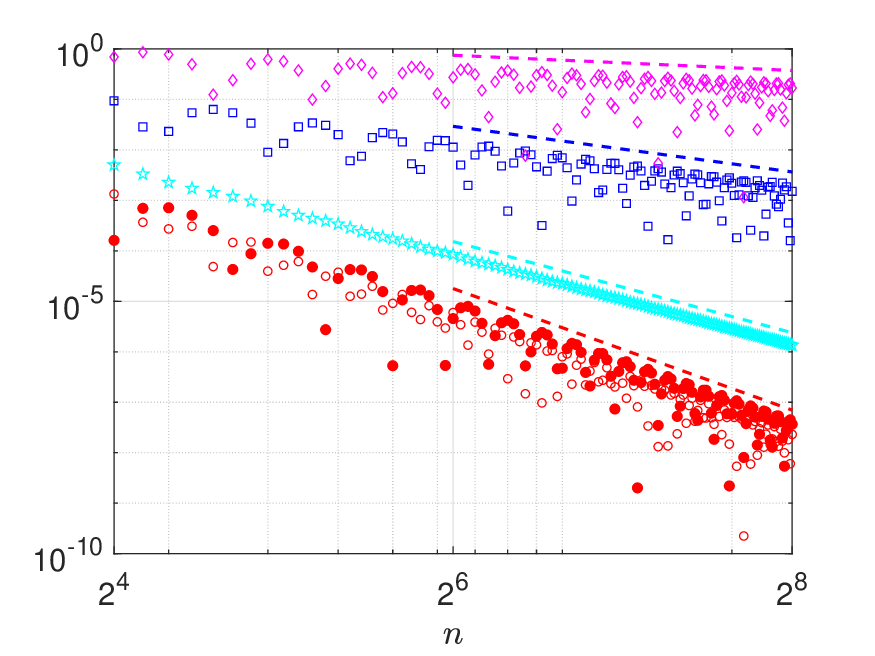}
\caption{Errors of the first- (left) and second-order (right) Jacobi differentiations as a function of $n$ at $x=1$ ($\Diamond$), $x=-1$ ($\Box$) and $x=-1/4$ ($\circ$), $x=0$ ($\bullet$) and $x=1/4$ (\ding{73}). The dashed lines from top to bottom show the rates $O(n^{-s})$ with $s=5/2,7/2,5$ (left) and $s=1/2,3/2,3,4$ (right) .}\label{fig:JacobiDiff2}
\end{figure}

\section{Extensions and discussions}\label{sec:Extension}
In this section, we discuss several topics that are closely related to Jacobi spectral approximation and differentiation.

\subsection{Spectral differentiation using Chebyshev interpolation}\label{sec:ChebInterp}
Chebyshev interpolation plays an important role in many applications, such as Clenshaw-Curtis quadrature, rootfinding and Chebyshev spectral collocation methods for solving differential and integral equations (see, e.g., \cite{Trefethen2000}). Let $\{x_j\}_{j=0}^{n}$ be the set of Chebyshev--Lobatto points (also known as Clenshaw--Curtis points), i.e., $ x_j=\cos(j\pi/n)$, and let $p_n(x)$ be the polynomial of degree $n$ which interpolates $f(x)$ at these points. It is known that
\begin{align}
p_n(x) = \sum_{k=0}^{n}{''} c_k T_k(x), \quad  c_k =
\frac{2}{n} \sum_{j=0}^{n}{''} f(x_j) T_k(x_j),  \nonumber
\end{align}
where the double prime indicates that both the first and last terms
of the summation are to be halved and $T_k(x)$ is the Chebyshev polynomial of the first kind of degree $k$. It is well known that the coefficients
$\{c_k\}_{k=0}^{n}$ can be computed rapidly by using the FFT in
only $O(n\log n)$ operations. Below we state pointwise error estimates of Chebyshev spectral differentiation using $p_n(x)$.
\begin{theorem}\label{thm:Cheb}
Let $f$ be defined in \eqref{def:ModelFun} and assume that $\sigma>0$ is not an even integer when $\xi\in(-1,1)$ and is not an integer when $\xi=\pm1$. As $n\rightarrow\infty$, the following results hold.
\begin{itemize}
\item For $\xi\in(-1,1)$ and for $m<\min\{\sigma,1+\sigma/2\}$, then
\begin{align}\label{eq:ChebInterp1}
f^{(m)}(x) - p_n^{(m)}(x) = \left\{
            \begin{array}{ll}
{\displaystyle O(n^{m-\sigma})},      & \hbox{$x=\xi$,}                   \\[5pt]
{\displaystyle O(n^{m-1-\sigma}) },   & \hbox{$x\in(-1,\xi)\cup(\xi,1)$,} \\[5pt]
{\displaystyle O(n^{2m-2-\sigma}) },  & \hbox{$x=\pm1$,}
            \end{array}
            \right.
\end{align}
and the last result holds for $m\geq1$ since it is zero when $m=0$.

\item For $\xi=\pm1$ and for $m<\sigma$, then
\begin{align}\label{eq:ChebInterp2}
f^{(m)}(x) - p_n^{(m)}(x) = \left\{
            \begin{array}{ll}
{\displaystyle O(n^{m-1-2\sigma}) },   & \hbox{$x\in(-1,1)$,} \\[5pt]
{\displaystyle O(n^{2m-2\sigma})},     & \hbox{$x=\xi$,}      \\[5pt]
{\displaystyle O(n^{2m-2-2\sigma})},   & \hbox{$x=-\xi$.}
            \end{array}
            \right.
\end{align}
and the last two results hold for $m\geq1$ since they are zero when $m=0$.
\end{itemize}
\end{theorem}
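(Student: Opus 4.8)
The plan is to reduce the interpolation problem to the already-analyzed projection problem plus an aliasing correction. Writing $S_n$ for the Chebyshev (i.e.\ $\alpha=\beta=-1/2$) projection of $f$, I would first split
\[
f^{(m)}(x)-p_n^{(m)}(x)=\big(f^{(m)}(x)-S_n^{(m)}(x)\big)+\big(S_n^{(m)}(x)-p_n^{(m)}(x)\big),
\]
where the first bracket is exactly the remainder $\mathcal{R}_n^m(x)$ already controlled by Theorem~\ref{thm:JacobiDiff} with $\alpha=\beta=-1/2$ (so that $\psi_{\alpha,\beta}\equiv0$). For the second bracket I would invoke the standard aliasing identity for Chebyshev--Lobatto interpolation, $c_k=a_k+\sum_{j\ge1}(a_{2jn-k}+a_{2jn+k})$ for $0\le k\le n$ (with the usual halving at $k=0,n$), which follows from $T_{2jn\pm k}(x_\ell)=T_k(x_\ell)$ at the nodes. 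This gives the working formula
\[
f^{(m)}(x)-p_n^{(m)}(x)=\sum_{k=n+1}^{\infty}a_kT_k^{(m)}(x)-\sum_{k=0}^{n}{}''\Big(\sum_{j=1}^{\infty}(a_{2jn-k}+a_{2jn+k})\Big)T_k^{(m)}(x),
\]
valid under the stated restrictions on $m$, which guarantee termwise differentiability and absolute convergence of all the series involved.

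Next I would substitute the leading Chebyshev coefficient asymptotics from Theorem~\ref{thm:AsyJac}, namely $a_k\sim\mathcal{A}_{\sigma,\xi}^{-1/2,-1/2}\,k^{-\sigma-1/2}\cos(k\arccos\xi)$ when $\xi\in(-1,1)$, and $a_k=O(k^{-2\sigma-1/2})$ (carrying the $(-1)^k$ when $\xi=-1$) when $\xi=\pm1$, and then, exactly as in the proof of Theorem~\ref{thm:JacobiDiff}, expand every product of trigonometric factors into the oscillatory sums $\Psi_\nu^{\mathrm{C}}$ and $\Psi_\nu^{\mathrm{S}}$ of \eqref{def:PsiFun}, whose behaviour is given by Lemma~\ref{lem:AsmPsiFun}. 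At an interior smooth point $x\in(-1,\xi)\cup(\xi,1)$ I would use the interior asymptotics of $T_k^{(m)}(x)$ employed in the proof of Theorem~\ref{thm:JacobiDiff} (an $O(k^m)$ oscillation) and check that the aliasing sum contributes at the same order $O(n^{m-1-\sigma})$ as the projection term. At the interior singularity $x=\xi$ the reflected indices collapse the cosine onto its diagonal, producing a genuinely nonoscillatory contribution of size $n^{m-\sigma}$ for both parities of $m$; this is what erases the parity dichotomy of Theorem~\ref{thm:JacobiDiff} and yields the uniform rate $O(n^{m-\sigma})$.

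The delicate case, and the step I expect to be the main obstacle, is the evaluation at the nodes where no singularity sits: $x=\pm1$ when $\xi\in(-1,1)$, and $x=-\xi$ when $\xi=\pm1$ (the node carrying the singularity, $x=\xi=\pm1$, simply retains the projection order $O(n^{2m-2\sigma})$, being zero for $m=0$). There I would use the closed form $T_k^{(m)}(1)=\prod_{i=0}^{m-1}(k^2-i^2)/(2i+1)$, a polynomial in $k$ of degree $2m$ (with an extra sign $(-1)^{k+m}$ at $x=-1$). Writing the combined error at $x=1$ in the reindexed form
\[
\sum_{k=0}^{n}{}''\sum_{j=1}^{\infty}\Big[a_{2jn-k}\big(T_{2jn-k}^{(m)}(1)-T_k^{(m)}(1)\big)+a_{2jn+k}\big(T_{2jn+k}^{(m)}(1)-T_k^{(m)}(1)\big)\Big]
\]
makes the mechanism transparent: for $m=0$ every bracket vanishes because $T_\ell(\pm1)$ is independent of $\ell$, recovering the exactness of interpolation at the nodes, and for $m\ge1$ the leading $O(n^{2m-\sigma-1})$ contributions of the projection and aliasing parts cancel, leaving the reduced rate $O(n^{2m-2-\sigma})$.

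The difficulty here is quantitative rather than structural: extracting this one-order gain forces me to retain the next-order term both in the coefficient asymptotics of Theorem~\ref{thm:AsyJac} and in the expansion of $T_k^{(m)}(\pm1)$, and then to sum the resulting oscillatory series in $k$ through Lemma~\ref{lem:AsmPsiFun} with enough precision to exhibit the cancellation rather than merely an upper bound. The case $\xi=\pm1$ is handled identically, with the nonoscillatory asymptotics $a_k=O(k^{-2\sigma-1/2})$ replacing the oscillatory ones and producing the analogous reduced rate $O(n^{2m-2-2\sigma})$ at $x=-\xi$, so that all entries of \eqref{eq:ChebInterp1} and \eqref{eq:ChebInterp2} follow.
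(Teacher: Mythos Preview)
Your plan is correct and rests on the same core ingredients as the paper (the Chebyshev aliasing identity, the coefficient asymptotics, and Lemma~\ref{lem:AsmPsiFun}), but the paper organizes them differently. Instead of first splitting into projection plus aliasing and then recombining at the endpoints to exhibit a cancellation, the paper starts from the single block--grouped representation
\[
(f-p_n)(x)=\sum_{\ell\ge1}\bigl(1-\cos(2\ell n\theta)\bigr)\!\!\sum_{k=(2\ell-1)n+1}^{(2\ell+1)n}\!\!a_k\cos(k\theta)\;-\;\sum_{\ell\ge1}\sin(2\ell n\theta)\!\!\sum_{k=(2\ell-1)n+1}^{(2\ell+1)n}\!\!a_k\sin(k\theta),
\]
taken from \cite{Wang2023a} (with $x=\cos\theta$), and differentiates it directly in $\theta$. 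Because the prefactors $1-\cos(2\ell n\theta)$ and $\sin(2\ell n\theta)$ vanish at $\theta=0,\pi$, exactness at the nodes for $m=0$ is immediate, and after one differentiation the limit $\theta\to0,\pi$ produces explicit closed forms such as $(f-p_n)'(1)=\sum_\ell\bigl[-(2\ell n)^2\sum_k a_k+2(2\ell n)\sum_k a_kk\bigr]$, from which the endpoint rate is read off via Lemma~\ref{lem:AsmPsiFun} without ever invoking Theorem~\ref{thm:JacobiDiff}. Your reindexed endpoint sum is an equivalent unfolding of this same identity, so the two routes coincide in substance; the paper's packaging simply makes the cancellation you flag as ``the main obstacle'' structural (built into the vanishing prefactors) rather than something to be verified by matching leading terms from two separate sources.

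One small slip: the Chebyshev coefficients $a_k$ in this section are normalized with respect to $T_k$, not $P_k^{(-1/2,-1/2)}$, so their actual decay is $O(k^{-\sigma-1})$ for $\xi\in(-1,1)$ and $O(k^{-2\sigma-1})$ for $\xi=\pm1$, a half-power off from what you wrote; this is only a normalization issue and does not affect your argument.
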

\begin{proof}
Since $f$ has the following uniformly convergent Chebyshev series
\begin{align}
f(x) = \sum_{k=0}^{\infty}{'} a_k T_k(x), \quad a_k = \frac{2}{\pi}
\int_{-1}^{1} \frac{f(x) T_k(x)}{\sqrt{1-x^2}} \mathrm{d}x, \nonumber
\end{align}
where the prime indicates that the first term of the summation should be
halved. By virtue of the aliasing formula of the coefficients $\{a_k\}$ and $\{c_k\}$, we know from \cite[Equation~(3.16)]{Wang2023a} that
\begin{align}
(f - p_n)(x) &= \sum_{\ell=1}^{\infty} (1 - \cos(2\ell n\theta)) \sum_{k=(2\ell-1)n+1}^{(2\ell+1)n} a_k \cos(k\theta)   \nonumber \\
&~ - \sum_{\ell=1}^{\infty} \sin(2\ell n\theta)
\sum_{k=(2\ell-1)n+1}^{(2\ell+1)n} a_k \sin(k\theta),  \nonumber
\end{align}
where $x=\cos\theta$. Note that the asymptotics of the coefficients $\{a_k\}$ can be derived from Lemma \ref{thm:AsyJac} (see also \cite[Lemma 2.1]{Wang2023a}). The desired estimates \eqref{eq:ChebInterp1} and \eqref{eq:ChebInterp2} for $m=0$ can be derived immediately using Lemma \ref{lem:AsmPsiFun}. Further, differentiating the above equation with respect to $x$ yields
\begin{align}
(f - p_n){'}(x) &= \frac{1}{-\sin\theta} \left[ 2n\sin(2n\theta) \sum_{k=n+1}^{3n} a_k
\cos(k\theta) - (1 - \cos(2n\theta) ) \sum_{k=n+1}^{3n} a_k k \sin(k\theta)  \right. \nonumber \\
&\left. - 2n\cos(2n\theta) \sum_{k=n+1}^{3n} a_k \sin(k\theta) - \sin(2n\theta) \sum_{k=n+1}^{3n} a_k k \cos(k\theta)  - \cdots   \right]. \nonumber
\end{align}
When $x=1$, taking the limit $\theta\rightarrow0$ in the above equation gives
\begin{align}
(f - p_n){'}(1) &= -(2n)^2 \sum_{k=n+1}^{3n} a_k - (4n)^2 \sum_{k=3n+1}^{5n} a_k - \cdots \nonumber \\
& + 2\left( 2n \sum_{k=n+1}^{3n} a_k k + 4n \sum_{k=3n+1}^{5n} a_k k + \cdots \right). \nonumber
\end{align}
and when $x=-1$, taking the limit $\theta\rightarrow\pi$ gives
\begin{align}
(f - p_n){'}(-1) &= (2n)^2 \sum_{k=n+1}^{3n} a_k(-1)^k + (4n)^2 \sum_{k=3n+1}^{5n} a_k(-1)^k - \cdots \nonumber \\
& - 2\left( 2n \sum_{k=n+1}^{3n} a_k k (-1)^k + 4n \sum_{k=3n+1}^{5n} a_k k (-1)^k + \cdots \right). \nonumber
\end{align}
The estimates \eqref{eq:ChebInterp1} and \eqref{eq:ChebInterp2} for $m=1$ can be derived immediately by combining the above equations with Lemma \ref{lem:AsmPsiFun}. The estimates \eqref{eq:ChebInterp1} and \eqref{eq:ChebInterp2} for $m\geq2$ can be proved following the same line and we omit the details.
\end{proof}

As a direct consequence, we obtain the convergence rate of Chebyshev spectral differentiations using $p_n(x)$ in the maximum norm.
\begin{corollary}
As $n\rightarrow\infty$, the following results hold.
\begin{itemize}
\item For $\xi\in(-1,1)$, then
\[
\|f^{(m)} - p_n^{(m)}\|_{\infty} = O(n^{\max\{m,2m-2\}-\sigma}).
\]

\item For $\xi=\pm1$, then
\[
\|f^{(m)} - p_n^{(m)}\|_{\infty} = O(n^{\max\{2m,m-1\}-2\sigma}).
\]
\end{itemize}
\end{corollary}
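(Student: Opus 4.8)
The plan is to read the maximum-norm rate directly off the pointwise estimates of Theorem~\ref{thm:Cheb}. Since
\[
\|f^{(m)}-p_n^{(m)}\|_\infty=\sup_{x\in[-1,1]}\bigl|f^{(m)}(x)-p_n^{(m)}(x)\bigr|
\]
and the pointwise error splits into finitely many qualitatively distinct regimes---the singularity $x=\xi$, a generic smooth point, and the two endpoints $x=\pm1$---the governing rate is $O(n^{E})$, where $E$ is the largest (slowest-decaying) of the pointwise exponents supplied by the theorem, provided these estimates are uniform enough that the supremum is controlled, up to constants, by the behavior near the critical points.

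For $\xi\in(-1,1)$ the three exponents are $m-\sigma$ at $x=\xi$, $m-1-\sigma$ at an interior point, and $2m-2-\sigma$ at each endpoint (the last only for $m\ge1$, the endpoint error vanishing when $m=0$). The interior exponent is strictly smaller than the singularity exponent and therefore never dominates, so comparing the remaining two yields $\max\{m-\sigma,\,2m-2-\sigma\}=\max\{m,2m-2\}-\sigma$, with the crossover occurring at $m=2$. For $\xi=\pm1$ the exponents are $m-1-2\sigma$ in the interior, $2m-2\sigma$ at $x=\xi$, and $2m-2-2\sigma$ at $x=-\xi$ (the last two vanishing when $m=0$). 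For $m\ge1$ the exponent at $x=\xi$ exceeds the other two, giving $\max\{2m,m-1\}-2\sigma=2m-2\sigma$; when $m=0$ only the interior term survives and the claimed bound holds as an upper estimate. In each case the stated formula is recovered by this finite comparison of exponents.

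The step requiring care is the passage from pointwise to uniform control. The interior estimate in Theorem~\ref{thm:Cheb} holds on compact subsets of $(-1,1)$ with a constant that degenerates as $x$ approaches the endpoints or the singularity, through factors such as $1/\sin\theta$ and $1/\sin(\varphi_\xi^{\pm}(x))$ inherited from the underlying Chebyshev asymptotics. To take the supremum rigorously I would control these transition zones by showing that, on shrinking neighborhoods of the critical points, the growth of the $x$-dependent constant is dominated by the gap between the interior exponent and the larger critical-point exponent, so that no intermediate region produces a rate slower than the worst endpoint or singularity rate. This is the only non-formal ingredient; once the behavior in the transition regions is pinned down, the corollary follows from Theorem~\ref{thm:Cheb}.
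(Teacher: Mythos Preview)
Your approach is the same as the paper's: the corollary is stated there as a direct consequence of Theorem~\ref{thm:Cheb}, with no separate proof given, so you are in fact supplying more detail than the paper does. Your explicit comparison of the three pointwise exponents and your identification of the uniformity issue near the critical points are both sound; the paper simply asserts the result without discussing the transition zones.
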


\begin{remark}
The convergence rate of spectral differentiation using $p_n(x)$ also deteriorates two orders at both endpoints and only one order at each point in the smooth region. At the singularity $x=\xi$, the convergence rate deteriorates two orders if $\xi=\pm1$ and only one order if $\xi\in(-1,1)$. Moreover, comparing the convergence rates of spectral differentiation using Chebyshev interpolation and projection methods, we see that they converge at the same rate at the smooth region. At the critical points, i.e., endpoints and singularity, however, their convergence rates may have slight differences.
\end{remark}

In Figure \ref{fig:ChebDiff} we plot the errors of the first- and second-order differentiations of Chebyshev interpolant $p_n(x)$ at the points $x=1/3,2/3,1$ for the function $f(x)=|x-1/3|^{3}$. Clearly, this function corresponds to $\xi=1/3$ and $\sigma=3$ and therefore, by Theorem \ref{thm:Cheb}, the convergence rates of Chebyshev spectral differentiations of $p_n(x)$ are $O(n^{m-3})$ for $x=1/3$ and $O(n^{2m-5})$ for $x=\pm1$ and $O(n^{m-4})$ for $x\in(-1,1/3)\cup(1/3,1)$. We can see that the errors at the points $x=1/3,2/3,1$ decay at the predicted rates. In Figure \ref{fig:ChebDiff2} we plot the errors of the first- and second-order differentiations of Chebyshev interpolant $p_n(x)$ at the points $x=-1,1/5,1$ for the function $f(x)=(1+x)^{5/2}$ which has an endpoint singularity at $\xi=-1$. By Theorem \ref{thm:Cheb}, the convergence rates of Chebyshev spectral differentiations of $p_n(x)$ are $O(n^{2m-5})$ for $x=-1$ and $O(n^{m-6})$ for $x=1/5$ and $O(n^{2m-7})$ for $x=1$. We can see that the errors at the points $x=-1,1/5,1$ decay at the predicted rates for $m=1,2$.

\begin{figure}[htbp]
\centering
\includegraphics[width=0.49\textwidth,height=0.4\textwidth]{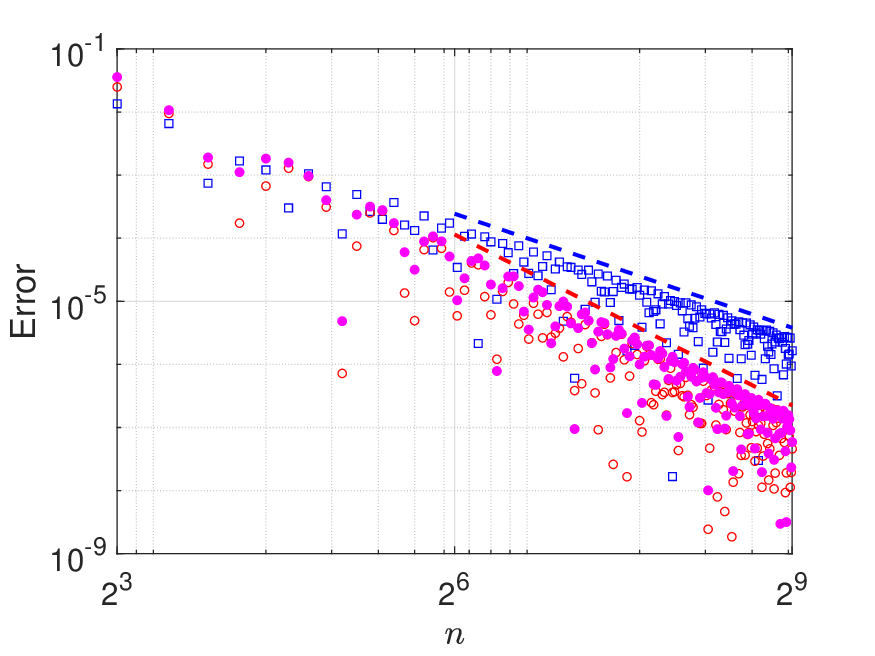}
\includegraphics[width=0.49\textwidth,height=0.4\textwidth]{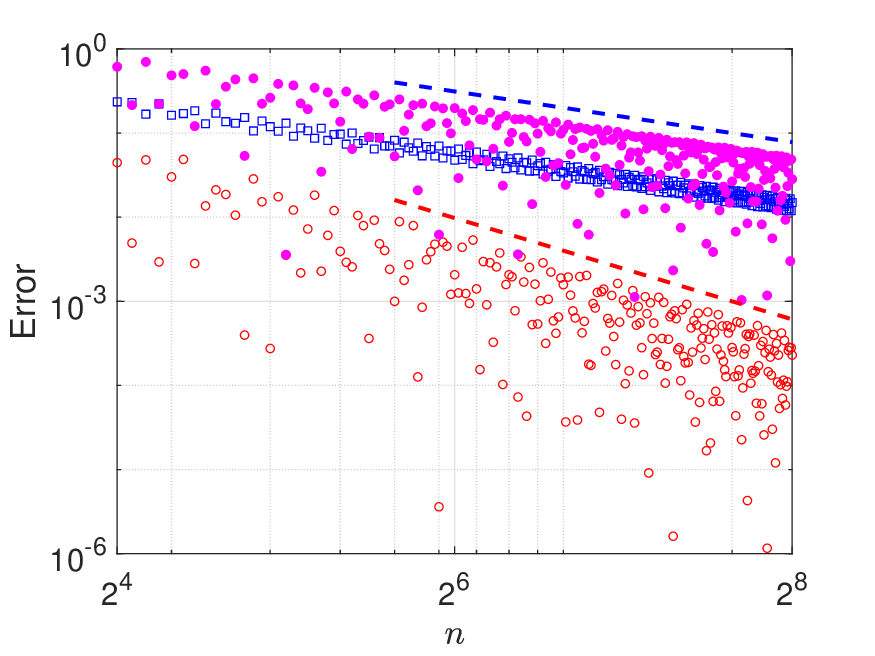}
\caption{Errors of the first- (left) and second-order (right) differentiations of Chebyshev interpolant $p_n(x)$ as a function of $n$ at $x=1/3$ ($\Box$), $x=2/3$ ($\circ$) and $x=1$ ($\bullet$) for $f(x)=|x-1/3|^{3}$. The dashed lines from top to bottom show the rates $O(n^{-s})$ with $s=2,3$ (left) and $s=1,2$ (right) .}\label{fig:ChebDiff}
\end{figure}

\begin{figure}[htbp]
\centering
\includegraphics[width=0.49\textwidth,height=0.4\textwidth]{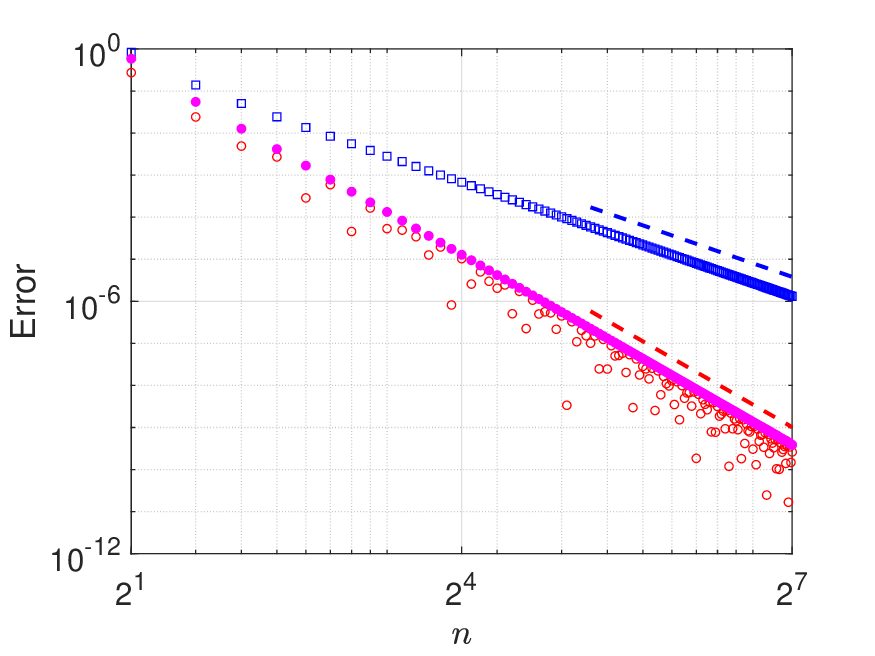}
\includegraphics[width=0.49\textwidth,height=0.4\textwidth]{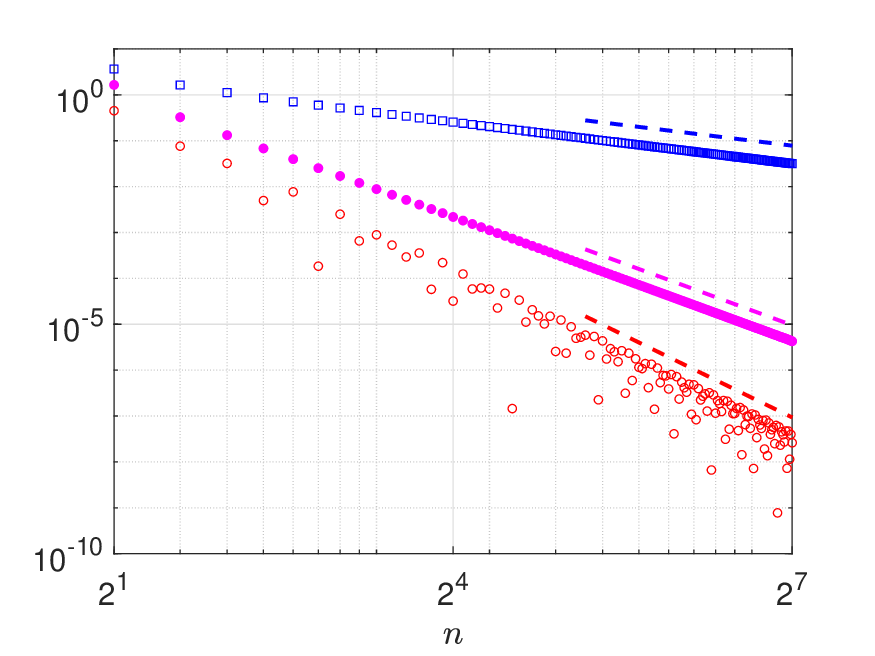}
\caption{Errors of the first- (left) and second-order (right) differentiations of Chebyshev interpolant $p_n(x)$ as a function of $n$ at $x=-1$ ($\Box$), $x=1/5$ ($\circ$) and $x=1$ ($\bullet$) for the function $f(x)=(1+x)^{5/2}$. The dashed lines from top to bottom show the rates $O(n^{-s})$ with $s=3,5$ (left) and $s=1,3,4$ (right).}\label{fig:ChebDiff2}
\end{figure}

\subsection{Is Lebesgue lemma sharp for Jacobi projection?}
Let $p_n^{*}$ denote the best polynomial approximation of degree $n$ to $f$ in the maximum norm. For $f\in{C}[-1,1]$, the classical Lebesgue lemma states that
\begin{equation}\label{eq:Lebesgue}
\|f - S_n^{(\alpha,\beta)}\|_\infty \leq (1 + \Lambda_n^{(\alpha,\beta)}) \|f - p_n^{*}\|_\infty,
\end{equation}
and $\Lambda_n^{(\alpha,\beta)}$ is the Lebesgue constant of $S_n^{(\alpha,\beta)}$. From \cite{Frenzen1986,Rau1929} we know that
\begin{equation}
\Lambda_n^{(\alpha,\beta)} = \left\{
            \begin{array}{ll}
{\displaystyle O(n^{\max\{\alpha,\beta\}+1/2})},   & \hbox{$\max\{\alpha,\beta\}>-1/2$,}    \\[5pt]
{\displaystyle O(\log n)},   & \hbox{$\beta\leq\alpha=-1/2$ or $\alpha\leq\beta=-1/2$,}  \\[5pt]
{\displaystyle O(1)},   & \hbox{$\max\{\alpha,\beta\}<-1/2$,}
            \end{array}
            \right.
\end{equation}
as $n\rightarrow\infty$. Is Lebesgue lemma sharp for the error estimate of $S_n^{(\alpha,\beta)}$? Here we consider the model function \eqref{def:ModelFun} to gain some insight. It is known that $\|f - p_n^{*}\|_\infty=O(n^{-\sigma})$ when $\xi\in(-1,1)$ and $\|f - p_n^{*}\|_\infty=O(n^{-2\sigma})$ when $\xi=\pm1$. On the other hand, by Corollary \ref{cor:JacMax}, we know that the actual convergence rate of $S_n^{(\alpha,\beta)}$ is
\begin{equation}\label{eq:Optimal}
\|f - S_n^{(\alpha,\beta)}\|_\infty = \left\{
            \begin{array}{ll}
{\displaystyle O(n^{-\sigma+\max\{0,\alpha-1/2,\beta-1/2 \}})},   & \hbox{$\xi\in(-1,1)$,}    \\[5pt]
{\displaystyle O(n^{-2\sigma+\max\{0,\alpha-\beta-1 \}})},   & \hbox{$\xi=-1$,}  \\[5pt]
{\displaystyle O(n^{-2\sigma+\max\{0,\beta-\alpha-1\}}) },   & \hbox{$\xi=1$.}
            \end{array}
            \right.
\end{equation}
We see that the convergence rates predicted by Lebesgue lemma might be misleading in certain situations, especially when $\alpha,\beta$ are close and both are large. To illustrate this, consider the function $f(x)=(1-x)^{7/2}$, i.e., $\xi=1$ and $\sigma=7/2$. When $\alpha=4$ and $\beta=5$, the predicted convergence rate by Lebesgue lemma in \eqref{eq:Lebesgue} is only $O(n^{-3/2})$. However, by \eqref{eq:Optimal} the actual convergence rate of $S_n^{(\alpha,\beta)}$ is $O(n^{-7})$. We see that Lebesgue lemma gives a rather bad prediction.

\subsection{Superconvergence of Jacobi approximation and differentiation}\label{sec:super}
Superconvergence theory has received much attention in finite element methods for differential equations and collocation methods for Volterra integral equations (see, e.g.,
\cite{Brunner2004,Wahlbin1995}). In the case of spectral methods, however, only a few studies had been conducted in the literature (see, e.g., \cite{Zhang2012}).

We consider the superconvergence points of Jacobi approximation and differentiation for the function \eqref{def:ModelFun}. In the case $\xi\in(-1,1)$, from the proof of Theorem \ref{thm:JacobiDiff}, we know for $x\in(-1,\xi)\cup(\xi,1)$ that
\begin{align}
\mathcal{R}_{n}^{m}(x) &= \frac{2^{(\alpha+\beta-3)/2} \mathcal{A}_{\sigma,\xi}^{\alpha,\beta} }{\sqrt{\pi \omega_{m+\alpha+1/2,m+\beta+1/2}(x) }} \left( \frac{\sin(\psi_{\alpha,\beta}(\xi)+\psi_{\alpha,\beta}(x)+m\pi/2-(2n+1)\varphi_{\xi}^{+}(x))}{\sin(\varphi_{\xi}^{+}(x))} \right. \nonumber \\
& \left. - \frac{\sin(\psi_{\alpha,\beta}(\xi)-\psi_{\alpha,\beta}(x)-m\pi/2+(2n+1)\varphi_{\xi}^{-}(x))}{\sin(\varphi_{\xi}^{-}(x))}   \right) n^{m-\sigma-1} + \mathrm{HOT}.  \nonumber
\end{align}
Thus, the superconvergence points can be derived by imposing the condition
\begin{align}
& \frac{\sin(\psi_{\alpha,\beta}(\xi)+\psi_{\alpha,\beta}(x)+m\pi/2-(2n+1)\varphi_{\xi}^{+}(x))}{\sin(\varphi_{\xi}^{+}(x))} \nonumber \\
& - \frac{\sin(\psi_{\alpha,\beta}(\xi)-\psi_{\alpha,\beta}(x)-m\pi/2+(2n+1)\varphi_{\xi}^{-}(x))}{\sin(\varphi_{\xi}^{-}(x))} = 0.   \nonumber
\end{align}
Clearly, the convergence rate of Jacobi approximation (i.e., $m=0$) and differentiation (i.e., $m\geq1$) at the roots of the above equation will be faster than the rate $O(n^{m-\sigma-1})$ when the roots are not close to the critical points $\{\pm1,\xi\}$. In the case $\xi=1$, by \eqref{eq:JacEndAsyR} and Lemma \ref{lem:AsmPsiFun}, we obtain for $x\in(-1,1)$ that
\begin{align}
\mathcal{R}_{n}^{m}(x) 
&= \frac{2^{(\alpha+\beta+1)/2-1} \mathcal{B}_{\sigma}^{R} }{\sqrt{\pi \omega_{m+\alpha+1/2,m+\beta+1/2}(x) }} \frac{\sin(\psi_{\alpha,\beta}(x)+m\pi/2-(n+1/2)\arccos(x))}{\sin(\arccos(x)/2) n^{2\sigma+\alpha+3/2-m}} + \mathrm{HOT}, \nonumber
\end{align}
and thus the superconvergence points can be derived by imposing the condition
\[
\sin(\psi_{\alpha,\beta}(x)+m\pi/2-(n+1/2)\arccos(x)) = 0,
\]
which gives
\[
x_j^R = \cos\left( \frac{(2\alpha+1)\pi/4 + m\pi/2 + j\pi}{n+1+(\alpha+\beta)/2}  \right), \quad  j\in\mathbb{Z}.
\]
In the case $\xi=-1$, we obtain for $x\in(-1,1)$ that
\begin{align}
\mathcal{R}_{n}^{m}(x) &= \frac{2^{(\alpha+\beta+1)/2-1} \mathcal{B}_{\sigma}^{L} }{\sqrt{\pi \omega_{m+\alpha+1/2,m+\beta+1/2}(x) }} \frac{\cos(\psi_{\alpha,\beta}(x)+m\pi/2-(n+1/2)\arccos(x))}{(-1)^{n+1}\cos(\arccos(x)/2)n^{2\sigma+\beta+3/2-m}} + \mathrm{HOT}, \nonumber
\end{align}
and thus the superconvergence points can be derived by imposing the condition
\[
\cos(\psi_{\alpha,\beta}(x)+m\pi/2-(n+1/2)\arccos(x)) = 0,
\]
which gives
\[
x_j^L = \cos\left( \frac{(2\alpha+1)\pi/4 + m\pi/2 + (j+1/2)\pi}{n+1+(\alpha+\beta)/2}  \right), \quad  j\in\mathbb{Z}.
\]
Note that these superconvergence points $\{x_j^R\}$ and $\{x_j^L\}$ are independent of  $\sigma$. Moreover, they are derived from the asymptotic expansion of $\mathcal{R}_{n}^{m}(x)$ for $x\in(-1,1)$, and thus the convergence rate at these points will deteriorate when they are close to $x=\pm1$.

In Figure \ref{fig:Super} we illustrate the errors at the superconvergence points $\{x_j^R\}_{j=0}^{n}$ for the function $f(x)=(1-x)^{5/2}e^{x}$ and we choose $n=20$, $\alpha=\beta=0$ and $m=0,1$. Clearly, we see that the error at each point $x_j^R$ is much smaller than the maximum error when $x_j^R$ is not close to the singularity $\xi=1$.

\begin{figure}[htbp]
\centering
\includegraphics[width=0.49\textwidth,height=0.4\textwidth]{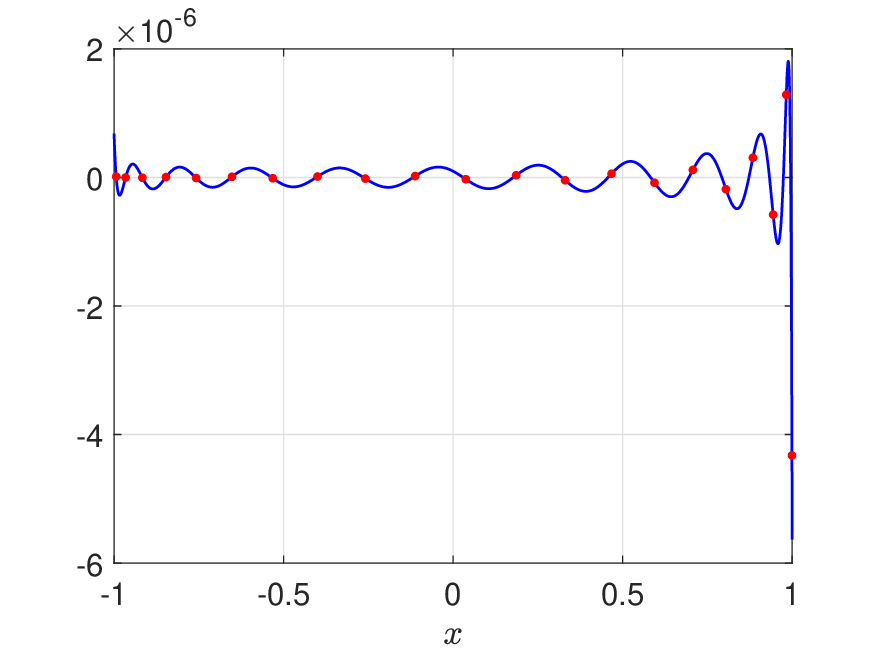}
\includegraphics[width=0.49\textwidth,height=0.4\textwidth]{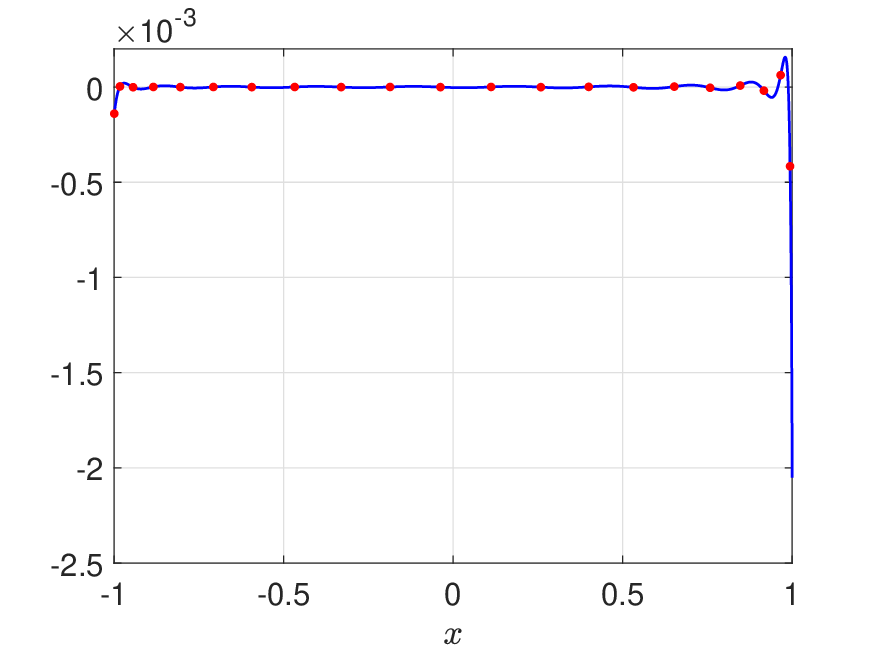}
\caption{Errors of the zero- (left) and first-order (right) Jacobi spectral differentiations with $n=20$ and $\alpha=\beta=0$ for $f(x)=(1-x)^{5/2}e^{x}$. The points indicates the errors at the superconvergence points $\{x_j^R\}_{j=0}^{n}$ .}\label{fig:Super}
\end{figure}

\subsection{Truncated power functions}
We extend our discussion to Jacobi spectral differentiation for some other singular functions.
Consider truncated power functions of the form
\begin{equation}\label{eq:TrunPower}
f(x) = (x-\xi)_{+}^{\sigma} g(x),
\end{equation}
where $\xi\in(-1,1)$ and $(\cdot)_{+}^{\sigma}$ is defined by $(x)_{+}^{\sigma}=x^{\sigma}$ for $x\geq0$ and $(x)_{+}^{\sigma}=0$ for $x<0$. Such functions arise naturally in the remainder of quadrature formulas by Peano kernel theorem and error estimates of their Legendre and Jacobi approximations were studied in \cite{Babuska2019,Gui1986,Xiang2023}. Below we consider the pointwise error estimates of Jacobi spectral differentiations to \eqref{eq:TrunPower}.

\begin{theorem}\label{thm:TrunPower}
Let $f$ be defined in \eqref{eq:TrunPower} and let $\{a_k^{(\alpha,\beta)}\}$ be the Jacobi coefficients of $f$. Moreover, let $\mathcal{R}_{n}^{m}(x)$ denote the remainder of Jacobi spectral differentiation of order $m$ to $f$. Then, the following results hold.
\begin{itemize}
\item[\rm(i)] For $\sigma>-1$, we have
\begin{align}
a_k^{(\alpha,\beta)} &= \mathcal{T}_{\sigma,\xi}^{\alpha,\beta} \frac{\cos(k\arccos(\xi)-\phi_{\alpha,\beta}(\xi))}{k^{\sigma+1/2}} + O\left(\frac{1}{k^{\sigma+3/2}}\right),
\end{align}
where $\phi_{\alpha,\beta}(x) = ((\sigma+\alpha)/2 + 3/4)\pi - (\alpha+\beta+1)\arccos(x)/2$ and
\[
\mathcal{T}_{\sigma,\xi}^{\alpha,\beta} = \frac{\Gamma(\sigma+1) (1-\xi)^{\frac{\sigma+\alpha}{2}+\frac{1}{4}} (1+\xi)^{\frac{\sigma+\beta}{2}+\frac{1}{4}} }{2^{\frac{\alpha+\beta-1}{2}} \sqrt{\pi} } g(\xi).
\]

\item[\rm(ii)] Let $\sigma>0$ be not an even integer and let $m<\min\{\sigma,(\sigma+1/2-\beta)/2,(\sigma+1/2-\alpha)/2\}$, we have for $x\neq\xi$,
\begin{align}
\mathcal{R}_{n}^{m}(x) &= \left\{
            \begin{array}{ll}
{\displaystyle O(n^{2m+\beta-\sigma-1/2})},    & \hbox{$x=-1$,}    \\[5pt]
{\displaystyle O(n^{2m+\alpha-\sigma-1/2})},   & \hbox{$x=1$,}     \\[5pt]
{\displaystyle O(n^{m-\sigma-1}) },   & \hbox{$x\in(-1,\xi)\cup(\xi,1)$,}
            \end{array}
            \right.
\end{align}
and for $x=\xi$,
\begin{align}\label{eq:PointError2}
\mathcal{R}_{n}^{m}(\xi) &= \left\{
            \begin{array}{ll}
{\displaystyle O(n^{m-\sigma-1}) },   & \hbox{$|m-\sigma|$ even,}    \\[5pt]
{\displaystyle O(n^{m-\sigma}) },     & \hbox{otherwise.}
            \end{array}
            \right.
\end{align}
\end{itemize}
\end{theorem}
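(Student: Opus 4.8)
The plan is to follow the same strategy used in the proof of Theorem~\ref{thm:JacobiDiff}, treating the truncated power function as a ``one-sided'' analogue of the two-sided model function \eqref{def:ModelFun}. First I would establish part~(i), the asymptotics of the Jacobi coefficients. Writing $a_k^{(\alpha,\beta)}$ as an integral against $P_k^{(\alpha,\beta)}$ and Taylor-expanding $g$ at $x=\xi$, the integral reduces to $\int_\xi^1 \omega_{\alpha,\beta}(x) P_k^{(\alpha,\beta)}(x)(x-\xi)^{\sigma+\ell}\,\mathrm{d}x$, which is precisely the quantity $\Upsilon_\ell^{(2)}(k)$ already evaluated in the proof of Theorem~\ref{thm:AsyJac} via \cite[Equation~(2.22.4.1)]{Prud1986}. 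Since only the right piece survives (the truncation kills the integral over $[-1,\xi]$), the leading asymptotics come from applying the same Gauss hypergeometric asymptotics \cite{Jones2001} and the ratio-of-gamma-functions expansion \cite[Equation~(5.11.13)]{Olver2010}. The resulting phase $\phi_{\alpha,\beta}$ and amplitude $\mathcal{T}_{\sigma,\xi}^{\alpha,\beta}$ differ from the two-sided case by the absence of the $\cos((\sigma+1)\pi/2)$ factor and by a phase shift, reflecting that we keep only one branch of the singularity.

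For part~(ii), I would insert the coefficient asymptotics from part~(i) into the differentiated remainder, exactly as in the display \eqref{eq:PointErrorS1} in the proof of Theorem~\ref{thm:JacobiDiff}. Using the derivative formula \cite[Equation~(18.9.15)]{Olver2010} gives
\begin{align}
\mathcal{R}_{n}^{m}(x) = \frac{\mathcal{T}_{\sigma,\xi}^{\alpha,\beta}}{2^m} \sum_{k=n+1}^{\infty} \frac{\cos(k\arccos(\xi)-\phi_{\alpha,\beta}(\xi))}{k^{\sigma+1/2-m}} P_{k-m}^{(\alpha+m,\beta+m)}(x) + \mathrm{HOT}. \nonumber
\end{align}
At the endpoints $x=\pm1$ one uses $P_k^{(\alpha,\beta)}(\pm1)$ and reduces the sum to combinations of $\Psi_\nu^{\mathrm{C}}$ and $\Psi_\nu^{\mathrm{S}}$ with argument $2\varphi_\xi^{\pm}(\pm1)$; in the smooth region $x\in(-1,\xi)\cup(\xi,1)$ one uses the interior asymptotics \cite[Theorem~8.21.8]{Szego1975} and collapses the product of cosines into $\Psi$-functions with arguments $2\varphi_\xi^{\pm}(x)$, both of which are nonzero mod $2\pi$, so Lemma~\ref{lem:AsmPsiFun} yields the rates $n^{2m+\beta-\sigma-1/2}$, $n^{2m+\alpha-\sigma-1/2}$, and $n^{m-\sigma-1}$ as claimed.

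The only delicate case is $x=\xi$, which governs the dichotomy in \eqref{eq:PointError2}. Here $\varphi_\xi^{-}(\xi)=0$, so the sum decomposes into a $\Psi_{\sigma-m}^{\mathrm{C}}(0,n)$ term plus oscillatory $\Psi$-terms with argument $2\arccos(\xi)\neq0$. By Lemma~\ref{lem:AsmPsiFun}, the oscillatory terms contribute at order $n^{m-\sigma-1}$, while the resonant term $\Psi_{\sigma-m}^{\mathrm{C}}(0,n)$ contributes at the larger order $n^{m-\sigma}$ \emph{unless} its coefficient vanishes. Tracking the trigonometric prefactor, that coefficient is proportional to $\cos$ of a phase involving $\phi_{\alpha,\beta}(\xi)$ and $m\pi/2$; because $\phi_{\alpha,\beta}$ carries the extra $(\sigma+\alpha)/2$ shift absent in Theorem~\ref{thm:JacobiDiff}, the vanishing condition now reads $|m-\sigma|$ even rather than $m$ even. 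The main obstacle will be carrying this phase bookkeeping carefully enough to pin down exactly when the resonant term drops out, since that single trigonometric identity is what distinguishes the $n^{m-\sigma-1}$ rate from the $n^{m-\sigma}$ rate. Once that is settled, the result follows as in the proof of Theorem~\ref{thm:JacobiDiff}.
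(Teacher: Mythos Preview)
Your proposal is correct and follows essentially the same approach as the paper: the paper's proof of Theorem~\ref{thm:TrunPower} simply points back to the proofs of Theorem~\ref{thm:AsyJac} and Theorem~\ref{thm:JacobiDiff}, and you have reproduced that route faithfully, including the key observation that only the $\Upsilon_\ell^{(2)}(k)$ integral survives for part~(i) and that the phase shift $\phi_{\alpha,\beta}(\xi)-\psi_{\alpha,\beta}(\xi)=(\sigma+1)\pi/2$ turns the resonant coefficient at $x=\xi$ into (a multiple of) $\sin((m-\sigma)\pi/2)$, which vanishes precisely when $|m-\sigma|$ is even. This is exactly the mechanism behind the dichotomy in~\eqref{eq:PointError2}, and it matches the explicit leading term the paper records in the remark following the theorem.
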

\begin{proof}
The assertion (i) follows from the proof of Theorem \ref{thm:AsyJac} and the assertion (ii) follows by using a similar proof as that of Theorem \ref{thm:JacobiDiff}. We omit the details.
\end{proof}

\begin{remark}
When the order of differentiation increases from $m$ to $m+1$, we see that the convergence rate of Jacobi differentiation deteriorates two orders at both endpoints and one order at each point in the smooth region. The pattern of deterioration at the singularity $x=\xi$ is more complicated. Specifically, if $\sigma\in\mathbb{N}$, then the convergence rate does not deteriorate if $|m-\sigma|$ is odd and deteriorates two orders if $|m-\sigma|$ is even. If $\sigma\notin\mathbb{N}$, however, the convergence rate at $x=\xi$ always deteriorates one order. Note that the pattern of deterioration of the function \eqref{eq:TrunPower} at the singularity is different from that of the function \eqref{def:ModelFun} when $\sigma\notin\mathbb{N}$.
\end{remark}

\begin{remark}
Babu\v{s}ka and Hakula in \cite{Babuska2019} studied the pointwise error estimate of Legendre approximation, i.e., $\alpha=\beta=0$, and they stated in \cite[Remark~1]{Babuska2019} that ``the theoretical constant $C^{*}(x)$ cannot be computed'', where $C^{*}(x)$ is defined by
\[
C^{*}(x) = \lim_{n\rightarrow\infty} n^{\kappa} \sup\left|f(x) - S_n^{(0,0)}(f;x)\right|,
\]
whenever $|f(x) - S_n^{(0,0)}(x)|=O(n^{-\kappa})$ at the point $x$ as $n\rightarrow\infty$. We point out that the leading term of the asymptotic of the remainder $\mathcal{R}_{n}^{m}(x)$ can be derived following the line of the proof of Theorem \ref{thm:JacobiDiff}, and thus $C^{*}(x)$ can actually be computed. For example, if $|m-\sigma|$ is not even, then for $x=\xi$, after some calculations we have
\begin{align}
\mathcal{R}_{n}^{m}(\xi) &= \frac{2^{\frac{\alpha+\beta-1}{2}} \mathcal{T}_{\sigma,\xi}^{\alpha,\beta} }{\sqrt{\pi \omega_{m+\alpha+1/2,m+\beta+1/2}(x) }} \sin\left(\frac{m-\sigma}{2}\pi\right) \frac{1}{\sigma-m} n^{m-\sigma} + O(n^{m-\sigma-1}). \nonumber
\end{align}
Hence, when $m=0$, then $|f(\xi) - S_n^{(0,0)}(\xi)|=O(n^{-\sigma})$ and
\begin{align}
C^{*}(\xi) &= \lim_{n\rightarrow\infty} n^{\sigma} |\mathcal{R}_{n}^{0}(\xi)| = (1-\xi^2)^{\sigma/2} \frac{\Gamma(\sigma)}{\pi} \left|\sin\left(\frac{\sigma\pi}{2}\right) \right|. \nonumber
\end{align}
\end{remark}

\section{Concluding remarks}\label{sec:conclusion}
We have studied the deterioration of convergence rate of spectral differentiation by Jacobi projection for functions with singularities. We showed that the deteriorations of convergence rate at the endpoints, singularities and other points in the smooth region exhibit different patterns. Extensions to some related problems, including spectral differentiation by Chebyshev interpolation, Lebesgue lemma for Jacobi approximation and superconvergence points of Jacobi approximation and differentiation and Jacobi spectral differentiation for truncated power functions, are discussed. 

Before closing this work, we list several issues for future research:
\begin{itemize}
\item For spectral differentiations using interpolation, we only analyzed the Chebyshev case by using the aliasing formula of Chebyshev coefficients. However, the analysis of other cases, such as Legendre interpolation, still remains open. Note that the aliasing formula is not available for the Legendre coefficients. Moreover, we only considered the case of exact samples. The case that the samples, i.e., $\{f(x_k)\}_{k=0}^{n}$, are polluted by noise is also worthy of further pursuit (see, e.g., \cite{Bruno2012}).

\item It is possible to extend the current analysis to some other singular functions, such as
    \[
    f(x) = |x-\xi|^{\sigma}\log^{\eta}|x-\xi| g(x),
    \]
    where $\sigma>0$ and $\eta\in\mathbb{N}$. Asymptotic estimate of the Jacobi coefficients of this function was analyzed in \cite{Xiang2021}. Similar deterioration results for Jacobi spectral differentiations can be expected. On the other hand, it is also possible to extend the current analysis to Jacobi spectral approximations with $\alpha=\beta\in\{-1,-2,\ldots\}$ and other spectral approximations, such as Laguerre and Hermite spectral approximations and modified Fourier expansion \cite{adcock2010b,Iserles2008}.

\item We have restricted our analysis to the one dimensional case in this work. It is of great interest to extend the analysis to the multivariate case. Note that this issue have received much attention in developing the approximation theory of the $p$-version of finite element method for singular solutions in two and three dimensions (see, e.g., \cite{Babuska2001,Guo2006}).
\end{itemize}
We will address these issues in the future.

\end{document}